\newtheorem{theorem}{Theorem}[section]
\newtheorem{corollary}{Corollary}[section]
\newtheorem{remark}{Remark}
\newtheorem{lemma}{Lemma}[section]
\newtheorem{prop}{Proposition}[section]
\newcommand{\eps}{\varepsilon}
\newcommand{\sign}{\operatorname{sign}}
\newcommand{\E}{{\mathbb E}}
\newcommand{\mb}{\mathbb}
\newcommand{\bs}{\mathbf}
\newcommand{\mc}{\mathcal}
\newcommand{\Var}{\operatorname{Var}}
\newcommand{\dotp}[2]{\left\langle#1,#2\right\rangle}
\newcommand{\pr}[1]{\mb{P}\left(#1\right)}
\providecommand{\sign}{\mathop\mathrm{sign}}
\newcommand\var{\mathrm{Var}}
\newcommand{\wt}{\widetilde}
\def\l{\left}
\def\r{\right}
\newcommand{\m}{\mathcal}
\newcommand{\W}{\mathbf{W}}
\newcommand{\V}{\mathbf{V}}
\newcommand{\A}{\mathbf{A}}
\newcommand{\B}{\mathbf{B}}
\newcommand{\R}{\mathbf{R}}
\newcommand{\T}{\mathbf{T}}
\title[Concentration inequalities for heavy-tailed random matrices]{Concentration and moment inequalities for sums of independent heavy-tailed random matrices}
\author{Moritz Jirak}
\address{Department of Statistics and Operations Research, Universit\"{a}t Wien, Austria}
\email{moritz.jirak@univie.ac.at}
\thanks{M. Jirak was supported by Austrian Science Fund (FWF), Project I
5485.}
\author{Stanislav Minsker}
\address{Department of Mathematics, University of Southern California, United States of America}
\email{minsker@usc.edu}
\thanks{S. Minsker and Y. Shen acknowledge support by the National Science Foundation grant DMS CAREER-2045068.}
\author{Yiqiu Shen}
\address{Department of Data Sciences and Operations, University of Southern California, United States of America}
\email{yiqiushe@usc.edu}
\author{Martin Wahl}
\address{Fakult\"{a}t f\"{u{}}r Mathematik, Universit\"{a}t Bielefeld, Germany}
\email{martin.wahl@math.uni-bielefeld.de}
\begin{document}
 \begin{abstract}
      We prove Fuk-Nagaev and Rosenthal-type inequalities for sums of independent random matrices, focusing on the situation when the norms of the matrices possess finite moments of only low orders. Our bounds depend on the ``intrinsic'' dimensional characteristics such as the effective rank, as opposed to the dimension of the ambient space. We illustrate the advantages of such results through several applications, including new moment inequalities for sample covariance matrices and their eigenvectors when the underlying distribution is heavy-tailed. Moreover, we demonstrate that our  techniques yield sharpened versions of moment inequalities for empirical processes.
\end{abstract}
\subjclass[2010]{60B20, 60E15, 62H25, 15A42}
\keywords{Fuk-Nagaev inequality, Rosenthal's inequality, Covariance estimation, Principal Components Analysis}

\maketitle

\section{Introduction}

Fuk-Nagaev inequalities \citep{fuk1971probability,nagaev1979large} generalize exponential deviation inequalities for sums of independent random variables, such as Bernstein's, Prokhorov's and Bennett's inequalities, to the case when the random variables satisfy minimal integrability conditions. For example, a corollary of Fuk and Nagaev's results is the following bound: for a sequence of independent, centered random variables $X_1,\ldots,X_n$ such that $\max_k \mb E|X_k|^p<\infty$ for some $p\geq 2$ (here and below, $\mb E[\,\cdot\,]$ stands for the expectation and $\max_k$ is a shorthand for $\max_{k=1,\ldots,n}$),  
\begin{multline}
\label{eq:fuk-nagaev-version}
\pr{\Big| \sum_{k=1}^n X_k\Big| \geq t}\leq 2\exp\left(-C_1(p)\frac{t^2}{\sum_{k=1}^n \mb EX_k^2} \right) + \pr{\max_{k} |X_k| >t/4}
\\
+C_2(p)\left(\frac{\sum_{k=1}^n \mb E|X_k|^p}{t^p} \right)^2.
\end{multline}
\citet{nagaev1979large} describes the applications of such results to laws of large numbers and moment inequalities. 
Later, \citet{einmahl2008characterization}, \citet{adamczak2010few}, \citet{rio2017constants} and \citet{bakhshizadeh2023sharp}, among others, improved the original estimates by Fuk and Nagaev in several ways: first, the inequalities were extended to martingales and Banach-space valued random variables, and second, the constants were sharpened. For example, a version of inequality \eqref{eq:fuk-nagaev-version} due to \citet{rio2017constants} holds with $C_1(p)=1/(2+\delta)$ for any $\delta>0$ and $C_2(p)$ of order $p^p$ (for $p>2$). The latter fact is important as the order of growth of these constants translates into the tail behavior of $\left|\sum_{k=1}^n X_k\right|$. 

The goal of this work is to prove a version of the Fuk-Nagaev inequality for the sums of independent random matrices and use it to sharpen existing moment inequalities. 
Let $\W_1,\ldots,\W_n\in \mb C^{d\times d}$ be a sequence of independent self-adjoint\footnote{It is well-known that the case of general rectangular matrices reduces to this one via the so-called ``Hermitian dilation,'' see \cite[][section 2.1.17]{tropp2015introduction}.} random matrices such that $\mb E\W_k=\mathbf{0}_{d\times d}$ for all $k$ and where the expectation is taken element-wise. Assume that for all $k$, $\|\W_k\|\leq U$ with probability $1$ where $\|\cdot\|$ stands for the operator (spectral) norm. A line of work by \citet{ahlswede2002strong,oliveira2010sums,tropp2012user} culminated in the following version of the so-called matrix Bernstein's inequality: for all $t>0$, 
\[
\pr{\l\|\sum_{k=1}^n \W_k \r\|\geq t}\leq 2d \exp\l( -\frac{t^2/2}{\sigma^2 + Ut/3}\r)
\]
where $\sigma^2 = \l\| \sum_{k=1}^n \mb E \W_k^2\r\|$. An attractive feature of this inequality (as opposed to, say, Talagrand's concentration inequality \citep{talagrand1996new}) is that it yields a bound for $\mb E\|\sum_{k=1}^n \W_k \|$, namely, $\mb E\|\sum_{k=1}^n \W_k \|\leq K( \sigma\sqrt{\log(d)} + U\log(d))$ for some absolute constant $K>0$. Tropp's results have been extended in two directions: first, it was shown by \citet{tropp2015introduction,minsker2011some} that the dimension factor $d$ can essentially be replaced by the so-called \textit{effective rank} $r\big(\sum_{k=1}^n \mb E \W_k^2 \big)$ where 
\[
r(\mathbf{A}):=\frac{\mathrm{trace}(\mathbf{A})}{\|\mathbf{A}\|}
\]
for a nonnegative definite matrix $\mathbf{A}$ (with the convention that $r(\mathbf{0}_{d\times d}):=0$). In particular, this version of Bernstein's inequality is applicable in the context of random Hilbert-Schmidt operators acting on Hilbert spaces. Second, the boundedness assumption was relaxed by \citet{koltchinskii2011neumann} to the requirement that 
\(
\max_k \l\| \|\W_k\| \r\|_{\psi_1}<\infty
\)
where the $\psi_1$ norm of a random variable $Z$ is defined by \[
\|Z\|_{\psi_1} = \inf\{ r>0: \ \mb E e^{\l|Z\r|/r }\leq 2 \}.
\]
Finally, \citet{klochkov2020uniform} showed that there exist absolute constants $c_1,c_2>0$ such that 
\[
\pr{\l\|\sum_{k=1}^n \W_k \r\|\geq t}\leq c_1 r(\V_n^2) \exp\l( -c_2\frac{t^2}{\|\V_n^2\| + t\l\| \max_k \|\W_k\| \r\|_{\psi_1}}\r)
\]
for all $t\geq c_3(\|\V_n^2\|^{1/2}+\left\| \max_k \|\W_k\|
> \right\|_{\psi_1})$ where $\V_n^2$ is any matrix satisfying $\V_n^2\succeq \sum_{k=1}^n \mb E \W_k^2$ 
(here and below, for self-adjoint matrices $\mathbf{A}$ and $\mathbf{B}$, we write $\mathbf{A}\succeq \mathbf{B}$  or $\mathbf{B}\preceq \mathbf{A}$ if $\mathbf{A}-\mathbf{B}$ is positive semidefinite). Our results allow to relax the integrability assumptions even further and cover the case of heavy-tailed random matrices, namely, random matrices such that $\mb E\|\W\|^p = \infty$ for some $p>0$ (however, we are still able to recover known bounds for the ``light-tailed'' random matrices). For example, \Cref{th:rosenthal} below implies that for all $p\geq 1$,\footnote{Given a non-negative random variable $Z$, we will use a shorthand $\mb E^{1/p}\, Z^p$ to denote $\l(\mb E Z^p\r)^{1/p}$. } 
\[
\mb{E}^{1/p}\left\|\sum_{k=1}^n \W_k\right\|^p \leq K \left(\sqrt{q}\|\V_n^2\|^{1/2} +q\mb E\max_k \|\W_k\| + \frac{p}{\log (ep)} \mb{E}^{1/p} \max_k \|\W_k\|^p \right)
\]
where $q=\log(r(\V_n^2))\vee p$ and $K$ is an absolute constant. This inequality sharpens previously known results of this type by \citet{junge2013noncommutative,dirksen2011noncommutative,chen2012masked}. For example, a version of Rosenthal's inequality by \citet{chen2012masked} states that 
\[
\mb{E}^{1/p}\left\|\sum_{k=1}^n \W_k\right\|^p\leq K \left(\sqrt{\tilde q} \|\V_n^2\|^{1/2} + \tilde q\,\mb E^{1/p}\max_k \|\W_k\|^{p} \right)
\]
where $\tilde q = \log(d)\vee p$. The fact that our bound depends on $r(\V_n^2)$ instead of $d$ immediately allows to extend it to Hilbert-Schmidt operators acting on Hilbert spaces.  

Finally, let us remark that the order of the constants in the inequality stated above is optimal. In the scalar case ($d=1$), sharp versions of Rosenthal's inequality with explicit constants are known \cite[for example, see][]{figiel1997extremal,ibragimov1998exact,utev1985extremal,johnson1985best}. For instance, \citet{johnson1985best} showed that without any additional assumptions, the best order of $C(p)$ in the inequality
\[
\mb E^{1/p} \Big| \sum_{k=1}^n \W_k \Big|^p \leq C(p)\left( \left( \sum_{k=1}^n \mb E\W_k^2\right)^{1/2} + \left( \sum_{k=1}^n \mb E|\W_k|^p \right)^{1/p} \right) 
\]
is $C(p) = K\frac{p}{\log(p)}$, while \citet{pinelis1985estimates} proved that $C_1(p) = K\sqrt{p}$ and $C_2(p) = Kp$ are the best possible order in the inequality of the form 
\begin{equation}
\label{eq:rosenthal4}
\mb E^{1/p} \Big| \sum_{k=1}^n \W_k \Big|^p \leq C_1(p)\left( \sum_{k=1}^n \mb E\W_k^2\right)^{1/2} + C_2(p)\mb E^{1/p}\max_{k} |\W_k|^p.
\end{equation}
It is clear that our results in \Cref{th:rosenthal} below yield a sharper version of inequality \eqref{eq:rosenthal4} for large $p$ whenever $\mb E\max_k |\W_k|$ is much smaller than $\mb E^{1/p}\max\limits_k |\W_k|^p$.

\subsection{Organization of the paper}

The rest of the exposition is organized as follows: we present the main results and their proofs in Sections \ref{sec:fuk-nagaev} and \ref{sec:rosenthal}. Applications of the developed techniques to empirical processes is discussed in Section \ref{sec:empirical} while implications for the problems of matrix subsampling, covariance estimation and eigenvector estimation are described in Sections \ref{sec:subsampling}, \ref{sec:covariance} and \ref{section:examples:eigenvector} respectively. Finally, Appendix \ref{sec:tech} contains the necessary background material.

\section{Main results}

In this section we present new Fuk-Nagaev and Rosenthal-type inequalities (Theorems \ref{th:fuk-nagaev3} and \ref{th:rosenthal}, respectively). The general idea behind the proofs is as follows. After a standard symmetrization argument, the Fuk-Nagaev-type inequalities are derived by truncating the norms of random matrices $\W_1,\ldots,\W_n$. The bounded part is handled using a matrix Bernstein-type inequality, while the unbounded part is treated with several tools for sums of independent random vectors in Banach spaces -- notably, Hoffmann-J{\o}rgensen inequality and an $L_p$-$L_1$ estimate for sums of independent Banach space-valued random variables going back to Talagrand, and Kwapien and Szulga). The Rosenthal-type inequalities then follow via a tail integration argument.

The required notation will be  introduced on demand. Let us remark that throughout the paper, the values of constants $K$, $c$, $C(\cdot)$ is often unspecified and can change from line to line; we use $K$ and $c$ to denote absolute constants and $C(\cdot)$ to denote constants whose value depends on the parameters in brackets. 

\subsection{Fuk-Nagaev-type inequality}
\label{sec:fuk-nagaev}

The following proposition is the key technical ingredient that will serve as the starting point for the derivation of the main results. 
Everywhere below, $M=\max_{k}\|\W_k\|$, $Q_{1/2}(Z)$ stands for the median of a real-valued random variable $Z$, and $\eps_1,\ldots,\eps_n$ denote independent symmetric Bernoulli random variables that are independent from $\W_1,\ldots,\W_n$. Finally, $\|\cdot\|_2$ denotes the Euclidean norm and $\dotp{\cdot}{\cdot}$ denotes the Euclidean inner product. 
\begin{prop}
\label{th:fuk-nagaev}
Let $\W_1, \dots, \W_n\in \mb{C}^{d\times d}$ be a sequence of centered, independent, self-adjoint random matrices. Let $U>0$, and assume that $\V_n^2$ satisfies $\V_n^2\succeq \sum_k \mb{E} \W_k^2 \bs{1}\{\|\W_k\|\leq U\}$. Finally, set $\sigma_U^2=\|\V_n^2\|$. Then, whenever 
\begin{equation}
\label{eq:condition:t}
t \geq 4\sigma_U \vee 4U/3\vee \sup_{\|v\|_2=1} Q_{1/2}\l(\dotp{\l(\sum_{k=1}^n W_k \r)v}{v}\r),\end{equation}
the following inequality holds:
\begin{multline}
\label{eq:fuk-nagaev}
\mb{P}\left(\left\|\sum_{k=1}^n \W_k\right\| > 12t\right) \leq 64r\big(\V_n^2\big) \exp\left[-\frac{(t/2)^2}{2\l(\sigma_U^2 + tU/6\r)}\right] 
\\
+16\mb{P}\left(\left\|\sum_{k=1}^n \eps_k\W_k \bs{1}\{\|\W_k\| > U\} \right\|>t/2\right) \mb{P}\left(\left\|\sum_{k=1}^n \eps_k \W_k\right\|>t\right) + 4\mb{P}\left(M > t\right).
\end{multline}
If the random matrices are symmetrically distributed (that is, $\W_k$ and $-\W_k$ have the same distribution for all $k$), then 
\begin{multline}
\label{eq:fuk-nagaev2}
\mb{P}\left(\left\|\sum_{k=1}^n \W_k\right\| > 3t\right) \leq 16r\big(\V_n^2\big) \exp\left[-\frac{(t/2)^2}{2\l(\sigma_U^2 + tU/6\r) }\right] 
\\
+4\mb{P}\left(\left\|\sum_{k=1}^n \W_k \bs{1}\{\|\W_k\| > U\} \right\|>t/2\right) \mb{P}\left(\left\|\sum_{k=1}^n \W_k\right\|>t\right) + \mb{P}\left(M > t\right)
\end{multline}
under the assumption that $t\geq 4\sigma_U \vee 4 U/3$. 
\end{prop}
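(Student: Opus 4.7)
The strategy is a matrix adaptation of the scalar Fuk--Nagaev argument built on three ingredients: (i) truncating each summand at level $U$; (ii) applying a matrix Bernstein inequality with the effective-rank dimension factor $r(V_n^2)$ to the bounded part; and (iii) invoking a Hoffmann--Jørgensen-type inequality for the large-norm remainder. The plan is to establish the symmetric bound \eqref{eq:fuk-nagaev2} first, in which the distributional symmetry of each $W_k$ removes the need for explicit Rademacher signs, and then derive \eqref{eq:fuk-nagaev} from it via a desymmetrization step in which the uniform median bound in \eqref{eq:condition:t} controls the recentering.

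For the symmetric case, set $Y_k = W_k \bs{1}\{\|W_k\|\leq U\}$ and $Z_k = W_k - Y_k$. The symmetry of each $W_k$ descends to $Y_k$ and $Z_k$ individually, so in particular $\mb{E} Y_k = 0$. The triangle inequality yields the inclusion
\[
\ac{\|\sum_k W_k\|>3t}\subseteq \ac{\|\sum_k Y_k\|>t/2}\cup\ac{\|\sum_k Z_k\|>5t/2}.
\]
Since $\|Y_k\|\leq U$ almost surely and $\sum_k \mb{E} Y_k^2\preceq V_n^2$, an effective-rank matrix Bernstein inequality \citep{minsker2011some,tropp2015introduction} bounds the probability of the first event by the exponential term in \eqref{eq:fuk-nagaev2}. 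For the second event, I would apply a Hoffmann--Jørgensen-type argument to the symmetric sum $\sum_k Z_k$: blocking on the first index $\tau$ at which $\|W_\tau\|>U$, together with L\'evy's maximal inequality for symmetric Banach-valued sums and the independence of $\sum_{k>\tau}Z_k$ from $\mathcal{F}_\tau$, produces the $\pr{M>t}$ term from the single-summand contribution $W_\tau$ and a product of two tail probabilities from conditioning on $\tau$. Using $\|\sum_k W_k\|\geq\|\sum_k Z_k\|-\|\sum_k Y_k\|$ in a final triangle step converts one of the resulting $Z$-tail probabilities into the $W$-tail probability that actually appears in \eqref{eq:fuk-nagaev2}.

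To pass from \eqref{eq:fuk-nagaev2} to \eqref{eq:fuk-nagaev}, I would symmetrize via an independent copy $W_k'$ and Rademacher signs $\eps_k$: the symmetrized matrices $\eps_k(W_k-W_k')$ satisfy the hypotheses of the symmetric case with the same $V_n^2$, and the bound transfers back to $\sum_k W_k$ by using $\|A\|=\sup_{\|v\|_2=1}|\dotp{Av}{v}|$ together with a L\'evy--Ottaviani inequality applied to each real-valued process $\dotp{(\sum_k W_k)v}{v}$, the recentering being controlled uniformly in $v$ by \eqref{eq:condition:t}. The main obstacle I anticipate is precisely this desymmetrization step: the operator norm is a supremum over an uncountable family of bilinear forms, so a scalar L\'evy inequality does not apply directly to $\|\sum_k W_k\|$, and care is needed to show that the uniform median control in \eqref{eq:condition:t} costs only an absolute constant. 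A secondary subtlety is tuning the Hoffmann--Jørgensen step so that it produces exactly the product $\pr{\|\sum_k\eps_k Z_k\|>t/2}\,\pr{\|\sum_k\eps_k W_k\|>t}$ with the claimed constants, rather than the more naive square of $Z$-tail probabilities that a direct application of the inequality would give.
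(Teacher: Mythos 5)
Your toolkit is right --- truncation at level $U$, matrix Bernstein with an effective-rank dimension factor, Hoffmann--Jørgensen, and symmetrization --- but the \emph{order} in which you stack these steps is different from the paper's, and both places you flag as "obstacles" are genuine gaps that your ordering creates and does not close. You propose to split $W_k=Y_k+Z_k$ first and then apply Hoffmann--Jørgensen to $\sum_kZ_k$ alone. That yields a square of $Z$-tails, $\pr{\|\sum_kZ_k\|>\cdot}^2$, whereas \eqref{eq:fuk-nagaev2} requires the \emph{mixed} product $\pr{\|\sum_kW_k\bs{1}\{\|W_k\|>U\}\|>t/2}\,\pr{\|\sum_kW_k\|>t}$. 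To convert one $Z$-tail into a $W$-tail via $\|\sum_kW_k\|\geq\|\sum_kZ_k\|-\|\sum_kY_k\|$, as you suggest, you must pay an additional $\pr{\|\sum_kY_k\|>\cdot}$ term and shift thresholds; you neither carry out this bookkeeping nor verify that the resulting constants match the statement. The paper avoids the issue entirely by reversing the order: it reduces to the symmetric case first, applies Hoffmann--Jørgensen (Proposition~\ref{prop:HJtail}) to the \emph{full} symmetrized sum $\sum_k\eps_kW_k$, obtaining $4\pr{\|\sum_k\eps_kW_k\|>t}^2+\pr{M>t}$, and only then splits \emph{one} of the two probability factors by the triangle inequality into the truncated-part tail $A_1$ (handled by matrix Bernstein) and the spiky-part tail $A_2$. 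The other factor is left intact as $\pr{\|\sum_k\eps_kW_k\|>t}$, which is exactly the $W$-tail in the statement; the product form is then automatic. Your "blocking on the first exceedance index $\tau$" plan amounts to re-deriving a bespoke Hoffmann--Jørgensen inequality --- unnecessary once you notice the split-one-factor trick.

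Your "main obstacle" about desymmetrization --- that the operator norm is a supremum over an uncountable family of bilinear forms, so scalar Lévy/Lévy--Ottaviani inequalities do not apply directly --- is also real, but you reach for the heavier machinery (independent copy $W_k'$ plus Lévy--Ottaviani on each form) when a ready-made tool exists. The paper cites Lemma~2.3.7 of van der Vaart--Wellner, a desymmetrization inequality formulated precisely for suprema of empirical processes, applied to the process $v\mapsto\dotp{(\sum_kW_k)v}{v}$. This is exactly what the median hypothesis \eqref{eq:condition:t} is designed to feed, and it delivers $\pr{\|\sum_kW_k\|>12t}\leq4\pr{\|\sum_k\eps_kW_k\|>3t}$ in one stroke; in the symmetric case it is trivial because $\eps_kW_k$ and $W_k$ are equidistributed, which is why \eqref{eq:fuk-nagaev2} carries better constants and drops the median condition. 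Until you make this reduction rigorous with controlled constants, and resolve the product-form issue above, the argument is not complete.
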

\begin{proof}
Let us reduce the general case to the situation where $\W_1,\ldots,\W_n$ are symmetrically distributed. To this end, it suffices to apply Lemma 2.3.7 in \citet{Vaart1996Weak-convergenc}: it implies that whenever $6t\geq \sup_{\|v\|_2=1} Q_{1/2}\l(\dotp{\l(\sum_{k=1}^n \W_k \r)v}{v}\r)$,
\[
\mb{P}\left(\left\|\sum_{k=1}^n \W_k\right\|>12t\right) \leq 4\mb{P}\left(\left\|\sum_{k=1}^n \eps_k \W_k\right\|>3t\right).
\]
Next, in view of Hoffmann-J{\o}rgensen's inequality (Lemma \ref{prop:HJtail} in Appendix \ref{sec:tech}), 
\[
\mb{P}\left(\left\|\sum_{k=1}^n \eps_k \W_k\right\|>3t\right) \leq 4 \mb{P}\left(\left\|\sum_{k=1}^n \eps_k \W_k\right\|>t\right)^2 + \mb{P}\left(M > t\right).
\]
Given $U>0$, we define, for each $k=1,\dots, n$, 
\begin{equation*}
    \wt{\mathbf{W}}_k:=\eps_k\W_k \bs{1}\{\|\W_k\|\leq U\} \text{ and } \bs{\Delta}_k:=\eps_k\W_k \bs{1}\{\|\W_k\| > U\}. 
\end{equation*}
Clearly, $\|\sum_{k=1}^n \eps_k\W_k\| \leq \|\sum_{k=1}^n \wt{\W}_k\| + \|\sum_{k=1}^n \bs{\Delta}_k\|$, all the random matrices $\wt{\W}_k$, $\bs{\Delta}_k$ are symmetric, and
\begin{equation*}
  \mb{P}\left(\left\|\sum_{k=1}^n \eps_k \W_k \right\|>t\right)\leq  \mb{P}\left(\left\|\sum_{k=1}^n \wt{\W}_k \right\|>t/2\right) + 
  \mb{P}\left(\left\|\sum_{k=1}^n \bs{\Delta}_k\right\|>t/2\right)
  :=D_1+D_2.
\end{equation*}
Therefore, 
\begin{equation}
\label{probSum}
\mb{P}\left(\left\|\sum_{k=1}^n \eps_k \W_k\right\|>3t\right) 
\leq 4D_1 + 4D_2 \mb{P}\left(\left\|\sum_{k=1}^n \eps_k \W_k\right\|>t\right) + \mb{P}\left(M > t\right).
\end{equation}
The first term on the right-hand side of inequality \eqref{probSum} can be bounded directly via a version of the matrix Bernstein's inequality (\Cref{matrixBernsteinErank} in Appendix \ref{sec:tech}). Let $\V_n^2$ satisfy $\V_n^2\succeq \sum_k \mb{E} \W_k^2 \bs{1}\{\|\W_k\|\leq U\}$. Then for $\sigma_U^2 = \|\V_n^2\|$ and $t$ such that 
$\frac{t}{2}\geq 2\l(\sigma_U \vee U/3 \r)$,
\begin{equation}
\label{eq:a02}
4D_1\leq 16 r\big(\V_n^2\big) \exp\left[-\frac{(t/2)^2}{2(\sigma_U^2 + tU/6)}\right].
\end{equation}
The result follows.
\end{proof}
\noindent We are now ready to deduce the first main result of the paper.  
\begin{theorem}
\label{th:fuk-nagaev3}
Let $\W_1, \dots, \W_n\in \mb{C}^{d\times d}$ be a sequence of centered, independent, self-adjoint random matrices, and assume that $\mb EM^p < \infty$ for some $p\geq 2$. Moreover, suppose that $\V_n^2$ satisfies $\V_n^2\succeq \sum_k \mb{E} \W_k^2$, and set $\sigma^2=\|\V_n^2\|$. Then
\begin{align}
\label{eq:fuk-nagaev3}
\mb{P}\left(\left\|\sum_{k=1}^n \W_k\right\| > 12t\right)&\leq  128r(\V_n^2) \exp\left[-\frac{(t/2)^2}{2(\sigma^2 + 4t\mb EM)}\right] 
 + 4\mb{P}\left(M\geq t\right) 
 \\
 &+ \left(\left(\frac{Kp}{\log (ep)}\right)^{p}\frac{\mb{E}M^p}{t^{p}}\right)^2
\end{align}
whenever $t\geq 4\sigma \vee 32\mb{E} M$ and where $K$ is an absolute constant. 
\end{theorem}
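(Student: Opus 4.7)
The plan is to apply Proposition~\ref{th:fuk-nagaev} with a truncation level $U$ of order $\mathbb{E}M$, and then to match each of its three output terms with a summand on the right-hand side of~\eqref{eq:fuk-nagaev3}. Since $V_n^2 \succeq \sum_k \mathbb{E}W_k^2 \succeq \sum_k \mathbb{E}W_k^2\,\bs{1}\{\|W_k\|\leq U\}$, the matrix $V_n^2$ is admissible in the proposition and $\sigma_U = \sigma$. The hypothesis $t \geq 2(\sigma \vee \mathbb{E}M/3)$ gives $t/2 \geq \sigma \vee U/3$ up to a bounded numerical factor depending on the proportionality constant between $U$ and $\mathbb{E}M$. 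The median condition in~\eqref{eq:condition:t} follows from Chebyshev applied to $\dotp{(\sum_k W_k)v}{v}$, whose variance is at most $\sigma^2$ because $\mathbb{E}\dotp{W_k v}{v}^2 \leq \dotp{\mathbb{E}W_k^2 v}{v}$; this yields $\sup_{\|v\|_2=1}Q_{1/2}(\cdot) \leq \sqrt{2}\,\sigma$, and any residual numerical discrepancy is absorbed into the overall constant $K$.

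With $U \asymp \mathbb{E}M$ the Bernstein denominator $\sigma_U^2 + tU/6$ is dominated by $\sigma^2 + 4t\,\mathbb{E}M$, so the exponential term in~\eqref{eq:fuk-nagaev} matches the first summand of~\eqref{eq:fuk-nagaev3} while $4\mathbb{P}(M > t)$ matches the second. The crux is the product term $16\,A_2 \cdot \mathbb{P}(\|\sum_k \eps_k W_k\| > t)$, where $A_2 := \mathbb{P}(\|\sum_k \eps_k W_k\,\bs{1}\{\|W_k\|>U\}\| > t/2)$. I would split the second factor by the triangle inequality,
\[
\mathbb{P}\bigl(\|\sum_k \eps_k W_k\| > t\bigr) \leq \mathbb{P}\bigl(\|\sum_k \eps_k W_k\,\bs{1}\{\|W_k\|\leq U\}\| > t/2\bigr) + A_2,
\]
and apply matrix Bernstein (Lemma~\ref{matrixBernsteinErank}) to the bounded part, producing another Bernstein-type contribution of the same shape. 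Using $A_2 \leq 1$ to absorb the resulting cross term into the Bernstein piece leaves an $A_2^2$ contribution that must be identified with the heavy-tailed summand.

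To bound $A_2$, Markov's inequality gives $A_2 \leq (t/2)^{-p}\,\mathbb{E}\|\sum_k \eps_k W_k\,\bs{1}\{\|W_k\|>U\}\|^p$. Choosing $U$ as a quantile of $M$ with $\mathbb{P}(M>U) \leq 1/4$ (by Markov this is achievable with $U \leq 4\,\mathbb{E}M$), the truncated sum vanishes with probability at least $3/4$, so the median of its norm is zero. A sharp Hoffmann--J{\o}rgensen-type moment inequality with Hitczenko--Montgomery-Smith constants then yields $\mathbb{E}\|\cdot\|^p \leq K(p/\log(ep))^p\,\mathbb{E}\max_k\|W_k\,\bs{1}\{\|W_k\|>U\}\|^p \leq K(p/\log(ep))^p\,\mathbb{E}M^p$, hence $A_2 \lesssim (p/\log(ep))^p\,\mathbb{E}M^p/t^p$; squaring produces the third summand of~\eqref{eq:fuk-nagaev3}. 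The main obstacle is precisely this last moment estimate: securing the sharp $(p/\log(ep))^p$ constant requires the Hitczenko--Montgomery-Smith refinement of Hoffmann--J{\o}rgensen for Banach/matrix-valued symmetric sums, together with a coordinated choice of $U \lesssim \mathbb{E}M$ that both kills the median contribution and keeps the Bernstein denominator of order $\sigma^2 + t\,\mathbb{E}M$.
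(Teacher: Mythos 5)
Your proposal follows essentially the same route as the paper: apply Proposition~\ref{th:fuk-nagaev} with $U \asymp \mb{E}M$, verify the median condition via Chebyshev (the paper too observes $\sup_v Q_{1/2}\leq\sigma\sqrt2$), split $\mb{P}\left(\left\|\sum_k\eps_kW_k\right\|>t\right)$ into bounded and truncated parts, absorb the cross term to leave $A_2^2$, and bound $A_2$ by Markov followed by a Hitczenko--Montgomery-Smith sharpening of Hoffmann--J{\o}rgensen (the paper's Theorem~\ref{HJLp}). The only slip is numerical and in your invocation of the HJ/HMS machinery: the paper's Theorem~\ref{HJLp} keeps $\mb{E}\left\|\sum_k\bs{\Delta}_k\right\|$ on the right-hand side, and eliminating it via Proposition~\ref{HJexpectation} (with $q=1$) requires the $1/6$-quantile of $\left\|\sum_k\bs{\Delta}_k\right\|$ to vanish, which needs $\mb{P}(M>U)\leq 1/6$; your choice $U\leq 4\mb{E}M$ only gives $\mb{P}(M>U)\leq 1/4$, and ``median zero'' is weaker than ``$1/6$-quantile zero.'' Taking $U\geq 6\mb{E}M$ (the paper uses $U=24\mb{E}M$, which also yields the advertised denominator $\sigma^2+4t\mb{E}M$) repairs this and the rest of your argument goes through unchanged.
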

\begin{proof}
We will continue using the notation introduced in the proof of Proposition \ref{th:fuk-nagaev}. First of all, note that 
\begin{equation}
\label{eq:variance:1}    
\sup_{\|v\|_2=1} Q_{1/2}\l(\dotp{\l( \sum_{k=1}^n \W_k\r)v}{v}\r)\leq \sigma
\end{equation}
for the choice of $\sigma$ stated above. 
Indeed, for any centered random variable $Z$ with finite variance, $|Q_{1/2}(Z)|\leq \var^{1/2}(Z)$. 
Moreover, for any random self-adjoint matrix $\mathbf{Y}$ and any unit vector $v$, 
\[
\mb E\dotp{\mathbf{Y}v}{v}^2 = \mb E \l[ v^\ast \mathbf{Y} vv^\ast \mathbf{Y}v \r] \leq \mb E \l[ v^\ast \mathbf{Y}^2 v \r] \leq \| \mb E \mathbf{Y}^2 \|.
\]
Applying these inequalities to $\mathbf{Y} = \sum_{k=1}^n \W_k$ and $Z = \dotp{\mathbf{Y}v}{v}$, we deduce the relation~\eqref{eq:variance:1}. Next, plugging the inequality
\[
\mb{P}\left(\left\|\sum_{k=1}^n \eps_k \W_k \right\|>t\right)\leq  \mb{P}\left(\left\|\sum_{k=1}^n \wt{\W}_k \right\|>t/2\right) +  \mb{P}\left(\left\|\sum_{k=1}^n \bs{\Delta}_k\right\|>t/2\right)
\]
into bound \eqref{eq:fuk-nagaev} with $\V_n^2$ and $\sigma^2$ specified in the conditions of the theorem, we see that 
\begin{multline*}
    \mb{P}\left(\left\|\sum_{k=1}^n \W_k\right\|>12t\right) \leq 128r\big(\V_n^2\big) \exp\left[-\frac{(t/2)^2}{2(\sigma^2 + tU/6)}\right] + 4\mb{P}\left(M > t\right) 
    \\
    + 16 \l(\mb{P}\left(\left\|\sum_{k=1}^n \bs{\Delta}_k\right\|>t/2\right)\r)^2. 
\end{multline*}
Now we apply Markov's inequality to get the bound 
\[
\mb{P}\left(\left\|\sum_{k=1}^n \bs{\Delta}_k\right\|>t/2\right) \leq \frac{\mb{E}\left\|\sum_{k=1}^n \bs{\Delta}_k\right\|^{p}}{(t/2)^{p}}.
\]
\Cref{HJLp} implies that 
\begin{equation*}
    \mb{E}^{1/p}\left\|\sum_{k=1}^n \bs{\Delta}_k\right\|^{p} 
    \leq K \frac{p}{\log(ep)}\left(\mb{E}\left\|\sum_{k=1}^n \bs{\Delta}_k \right\|  + \mb{E}^{1/p}\max_{k}\left\|\bs{\Delta}_k \right\|^p  \right).
\end{equation*}
Moreover, if we set $U:=24 \mb EM$, then \Cref{HJexpectation} applies with $q=1$ and $t_0=0$, the latter due to the inequality
\begin{equation}
    \label{eq:U-bound}
    \mb{P}\left(\left\|\sum_{k=1}^n \bs{\Delta}_k\right\|>0\right)\leq \mb{P}\left(M>U\right)\leq \frac{\mb{E}M}{U}\leq \frac{1}{24}.
\end{equation}
Specifically, \Cref{HJexpectation} yields 
\begin{equation}
\label{eq:a00}
 \mb{E} \left\|\sum_{k=1}^n \bs{\Delta}_k\right\| \leq 6\,\mb{E}\max_{k}\|\bs{\Delta}_k\| 
 \leq 6\mb{E}\max_{k}\| \W_k\|  \leq 6\,\mb{E}^{1/p}\max_{k}\|\W_k\|^p, 
\end{equation}
implying that 
\begin{equation}
\label{eq:bound:delta:term}
\mb{P}\left(\left\|\sum_{k=1}^n \bs{\Delta}_k\right\|>t/2\right)\leq \l(K\frac{p}{\log(ep)} \r)^{p} \frac{\mb EM^p}{t^p}.
\end{equation}
The conclusion follows.
\end{proof}
\begin{remark}
\label{remark:rosenthal}
If $\W_1,\ldots,\W_n$ have symmetric distribution, then 
\[
\mb{P}\left(\left\|\sum_{k=1}^n \W_k\right\|>t\right) \geq \frac{1}{2}\pr{M>t}
\]
in view of L\'{e}vy's inequality \citep[][Proposition 2.3]{ledoux2013probability}. This shows that the term $\mb EM$ is necessary in the lower bound for $t$.  The term $\sigma$ is also known to be necessary. Take, for instance, $\W_k = \xi_k \A_k$ where $\xi_1,\ldots,\xi_n$ are i.i.d. $\mathrm{N}(0,1)$ random variables and $\A_1,\ldots,\A_n$ are fixed self-adjoint matrices \citep[][Section 4.1.2]{tropp2012user}.
\end{remark}
\begin{remark}
In principle, one can increase the power in the term $\left(\left(\frac{Kp}{\log (ep)}\right)^{p}\frac{\mb{E}M^p}{t^{p}}\right)^2$ on the right-hand side of \eqref{eq:fuk-nagaev3} from $2$ to $2^k$ by applying the Hoffmann-J{\o}rgensen's inequality $k$ times. This would require adjusting constants in the inequality, such as replacing multiplier $12$ on the left-hand side of \eqref{eq:fuk-nagaev3} by $3\cdot 2^{k+1}$.
\end{remark}

\subsection{Moment inequalities}
\label{sec:rosenthal}

Now we will establish moment inequalities by integrating the tail estimates of \Cref{th:fuk-nagaev}. 
As in the proof of Theorem \ref{th:fuk-nagaev3}, we set 
\[
M=\max_{k}\|\W_k\| \text{ and }\bs{\Delta}_k=\eps_k\W_k \bs{1}\{\|\W_k\| > 24\mb EM\}.
\]
Moreover, assume that $\V_n^2$ satisfies $\V_n^2\succeq \sum_k \mb{E} \W_k^2 \bs{1}\{\|\W_k\|\leq 24\mb EM\}$ and let $\sigma^2 = \|\V_n^2\|$. The frequent choice in applications is $\V_n^2 = \sum_k \mb{E} \W_k^2$.
\begin{theorem}
\label{th:rosenthal}
Let $\W_1, \dots, \W_n\in \mb{C}^{d\times d}$ be a sequence of centered, independent, self-adjoint random matrices, and 
let 
\[
R_p = \inf\{s>0: \  \mb{P}\left(\left\|\sum_{k=1}^n \bs{\Delta}_k\right\|>s/2\right)\leq \frac{1}{8}3^{-p}\}.
\]
Then for all $p \geq 1$, 
\begin{equation}
\label{eq:rosenthal}
\mb{E}^{1/p}\left\|\sum_{k=1}^n \W_k\right\|^p\leq K \left(\sqrt{q}\sigma +q \mb EM + R_p + \mb E^{1/p} M^p \right),
\end{equation}
where $q=\log(r(\V_n^2))\vee p$ and $K>0$ is an absolute constant. In particular, we have the following ``closed-form'' Rosenthal-type moment inequalities: 
\begin{align}
\label{ineq:rosenthal1}
\mb{E}^{1/p}\left\|\sum_{k=1}^n \W_k\right\|^p &\leq K \left(\sqrt{q}\sigma +q\mb EM + \frac{p}{\log (ep)} \mb{E}^{1/p} M^p \right) \text{ and }
\\
\label{ineq:rosenthal2}
\mb{E}^{1/p}\left\|\sum_{k=1}^n \W_k\right\|^p &\leq K \left(\sqrt{q}\sigma +\log(r(\V_n^2))\mb EM + p\|M\|_{\psi_1}\right).
\end{align}
\end{theorem}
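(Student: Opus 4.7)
The plan is to integrate the tail inequality \eqref{eq:fuk-nagaev2} of Proposition \ref{th:fuk-nagaev} after first reducing to the symmetric case: apply the proposition to the symmetric sequence $\eps_1 W_1, \ldots, \eps_n W_n$ and recover the original moment via the standard symmetrization bound $\mb E \|\sum_k W_k\|^p \le 2^p\, \mb E \|\sum_k \eps_k W_k\|^p$. Writing $Z := \|\sum_k \eps_k W_k\|$, the layer-cake identity together with a change of variables $s = 3t$ yields $\mb E Z^p \le (3 t_0)^p + 3^p\, p \int_{t_0}^\infty t^{p-1}\, g(t)\, dt$, where $g(t)$ denotes the right-hand side of \eqref{eq:fuk-nagaev2} and $t_0 = 2(\sigma \vee U/3)$ is the threshold below which the tail bound is inapplicable. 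The sub-threshold contribution $(3 t_0)^p$ produces $\sigma$ and $U$ pieces after taking $p$-th roots, which will be absorbed into $\sigma\sqrt q$ and $Q_1 q$ under the eventual choice $U \asymp Q_1$.

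Above $t_0$, the three terms on the right-hand side of \eqref{eq:fuk-nagaev2} are integrated separately. The exponential Bernstein term $16 r(V_n^2) \exp[-(t/2)^2/(\sigma^2 + tU/6)]$, integrated against $t^{p-1}$ by standard Laplace-type bounds (splitting the exponent according to which of $\sigma^2$ and $tU/6$ dominates), contributes $(\sigma\sqrt q)^p + (Uq)^p$ with $q = \log r(V_n^2) \vee p$; the $\log r(V_n^2)$ absorbs the $r(V_n^2)$ prefactor once $q \ge \log r(V_n^2)$. The tail $\mb P(M > t)$ integrates directly to $\mb E M^p$, producing $\mb E^{1/p} M^p$ in the final bound.

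The main difficulty is the self-referential product $4\, \mb P(\|\sum_k \bs\Delta_k\|>t/2) \cdot \mb P(Z>t)$, and this is precisely where $Q_p$ intervenes. Split the integration at $t = Q_p$: for $t \le Q_p$, bound the first probability trivially by $1$, producing a contribution of order $Q_p^p$; for $t \ge Q_p$, the definition of $Q_p$ yields $\mb P(\|\sum_k \bs\Delta_k\|>t/2) \le \tfrac{1}{8} 3^{-p}$, so the remaining integral is at most $\tfrac{3^{-p}}{2}\, \mb E Z^p$. After multiplication by the leading factor $3^p$ from the change of variables, the total coefficient of $\mb E Z^p$ on the right-hand side is precisely $\tfrac{1}{2}$, which absorbs into the left-hand side. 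Taking $p$-th roots together with the symmetrization bound gives $(\mb E\|\sum_k W_k\|^p)^{1/p} \le K(t_0 + \sigma\sqrt q + Uq + Q_p + \mb E^{1/p} M^p)$; with $U$ of the order of $Q_1$, the terms $t_0$ and $Uq$ collapse into $Q_1 q$, producing \eqref{eq:rosenthal}. The technical subtlety is that this absorption only succeeds in the symmetric setup (with inflation factor $3$); attempting to use the general-case inequality \eqref{eq:fuk-nagaev} directly introduces an inflation factor $12$ whose $p$-th power defeats the $3^{-p}$ gain, which is why the preliminary desymmetrization is essential.

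The closed-form inequalities follow by explicit bounds on $Q_1$ and $Q_p$. With the truncation level $U = 24\, \mb E M$, inequality \eqref{eq:U-bound} together with the Hoffmann-J{\o}rgensen expectation bound (\Cref{HJexpectation}) yields $Q_1 \le K\, \mb E M$, and by Markov's inequality combined with the $L_p$ Hoffmann-J{\o}rgensen bound (\Cref{HJLp}, invoked as in the proof of Theorem \ref{th:fuk-nagaev3}), one obtains $Q_p \le K\, (\mb E\|\sum_k \bs\Delta_k\|^p)^{1/p} \le K\, \tfrac{p}{\log(ep)}\, \mb E^{1/p} M^p$, which yields \eqref{ineq:rosenthal1}. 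For \eqref{ineq:rosenthal2}, one instead estimates $Q_p$ via the $\psi_1$-version of matrix Bernstein's inequality of \cite{klochkov2020uniform} applied to $\sum_k \bs\Delta_k$, giving the sharper $Q_p \le K\, p\, \|M\|_{\psi_1}$ and hence the $\psi_1$ form of the inequality.
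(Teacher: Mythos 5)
Your argument for \eqref{eq:rosenthal} and \eqref{ineq:rosenthal1} reproduces the paper's proof essentially step for step: symmetrize to pass to $Z = \|\sum_k \eps_k W_k\|$, apply the layer-cake formula with the change of variables $s = 3t$, split at $t_0 = 2(\sigma \vee U/3)$, bound the Bernstein integral via \Cref{cor:matrixBernsteinLp}, and split the self-referential product term at $Q_p$ so that $3^p \cdot 4 \cdot \tfrac{1}{8}3^{-p} = \tfrac12$ absorbs into the left side. Your remark that the absorption works only after desymmetrization, because the $3^{-p}$ built into $Q_p$ must exactly cancel the $3^p$ Jacobian rather than the $12^p$ one arising from \eqref{eq:fuk-nagaev}, is correct and worth articulating; the paper uses \eqref{eq:fuk-nagaev2} for precisely this reason. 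The closed-form bounds $Q_1 \lesssim \mb E M$ and $Q_p \lesssim \tfrac{p}{\log(ep)}\mb E^{1/p}M^p$ via \eqref{eq:U-bound} and \Cref{HJLp} are also as in the paper.

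The one place you deviate is the estimate $Q_p \lesssim p\|M\|_{\psi_1}$ needed for \eqref{ineq:rosenthal2}. The paper obtains this from the $\psi_1$ version of the Hoffmann-J{\o}rgensen inequality (second line of \Cref{HJLp}): combining $\|\|\sum_k\bs\Delta_k\|\|_{\psi_1} \lesssim \mb E\|\sum_k\bs\Delta_k\| + \|M\|_{\psi_1}$ with \eqref{eq:a00} and $\mb E M \lesssim \|M\|_{\psi_1}$ gives a clean sub-exponential tail $\pr{\|\sum_k\bs\Delta_k\| > t} \leq e^{-ct/\|M\|_{\psi_1}}$, whose $\tfrac18 3^{-p}$-quantile is of order $p\|M\|_{\psi_1}$ with no rank factor. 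Your proposal to use the Klochkov–Zhivotovskiy $\psi_1$-Bernstein bound on $\sum_k\bs\Delta_k$ does not obviously reach the same conclusion: that inequality carries a prefactor $c_1 r(\wt V_n^2)$ and a Gaussian regime driven by $\|\wt V_n^2\|$ for $\wt V_n^2 \succeq \sum_k \mb E\bs\Delta_k^2$, so the resulting quantile is of order $\wt\sigma\sqrt{q'} + \|M\|_{\psi_1}\,q'$ with $q' = \log r(\wt V_n^2) \vee p$. When $\log r(\wt V_n^2) > p$, the term $\log r(\wt V_n^2)\,\|M\|_{\psi_1}$ is not dominated by $\log(r(V_n^2))\mb E M + p\|M\|_{\psi_1}$ (since $\|M\|_{\psi_1}$ can greatly exceed $\mb E M$), so you would land a strictly weaker bound than \eqref{ineq:rosenthal2}. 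Replace this step by the $\psi_1$ Hoffmann-J{\o}rgensen argument and the proof is complete.
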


\begin{remark}
    The well-known relation $\|M\|_{\psi_1}\leq K\log(n)\max_{k=1,\ldots,n}\l\|\|W_k\| \r\|_{\psi_1}$ (see \citep[]{Vaart1996Weak-convergenc}) could be useful when combined with inequality \eqref{ineq:rosenthal2}. 
\end{remark}
\begin{proof}
Observe that 
\begin{multline}
\label{eq:b01}
\mb E \l\| \sum_{k=1}^n \W_k\r\|^p \leq 2^p \mb E \l\| \sum_{k=1}^n \eps_k \W_k\r\|^p 
\\
= 2^p p\int_0^\infty t^{p-1}\pr{\l\| \sum_{k=1}^n \eps_k \W_k\r\| \geq t}dt  
= 6^p p\int_0^\infty t^{p-1}\pr{\l\| \sum_{k=1}^n \eps_k \W_k\r\| \geq 3t}dt,
\end{multline}
where we used the symmetrization inequality \citep[][Lemma 2.3.6]{Vaart1996Weak-convergenc} in the first step, the integration by parts formula in the second step, and the linear change of variables in the third step. To estimate the last integral, we set $U := 24 \mb EM$ and apply inequality \eqref{eq:fuk-nagaev2} in the range $t\geq t_0:= 4(\sigma\vee U/3)$ to deduce that 
\begin{multline}
\label{eq:moment:bound:2}
2^p \mb E \l\| \sum_{k=1}^n \eps_k \W_k\r\|^p 
\leq 24^p(\sigma\vee U/3)^p 
\\
+ 6^p p\Bigg( \int_0^\infty t^{p-1}\l(16r\left(\V_n^2 \right) \exp\left[-\frac{(t/2)^2}{2(\sigma^2 + tU/6)}\right] \wedge 1\r)dt 
\\
+ \int_0^\infty t^{p-1} \pr{M\geq t} dt
+ \int_0^\infty 4t^{p-1}\mb{P}\left(\left\|\sum_{k=1}^n \bs{\Delta}_k\right\|>t/2\right)\mb{P}\left(\left\|\sum_{k=1}^n \eps_k \W_k\right\|>t\right) dt \Bigg).
\end{multline}
Recalling the definition of $R_p$, one easily checks that 
\begin{multline*}
    6^p p\int_0^{\infty} 4t^{p-1}\mb{P}\left(\left\|\sum_{k=1}^n \bs{\Delta}_k\right\|>t/2\right)\mb{P}\left(\left\|\sum_{k=1}^n \eps_k \W_k\right\|>t\right) dt 
    \\
    \leq 4 \cdot 6^p R_p^p + 2^{p-1} \mb E \l\| \sum_{k=1}^n \eps_k \W_k\r\|^p. 
\end{multline*}
Combined with \eqref{eq:moment:bound:2}, this inequality implies that 
\begin{multline}
\label{eq:ineq:rosenthal:2}
2^{p-1} \mb E \l\| \sum_{k=1}^n \eps_k \W_k\r\|^p 
\leq 24^p(\sigma\vee U/3)^p  + 6^p \mb EM^p + 4 \cdot 6^p R_p^p
\\ 
+ 6^p p \int_0^\infty t^{p-1}\l(16r\left(\V_n^2\right) \exp\left[-\frac{(t/2)^2}{\sigma^2 + tU/6}\right] \wedge 1\r)dt.
\end{multline}
Arguing as in the proof of Corollary \ref{cor:matrixBernsteinLp} in Appendix \ref{sec:tech} implies that the last integral in the above display does not exceed
\[
K^p\l( \sqrt{q}\sigma + q U \r)^p,
\]
where $q=\log(r(\V_n^2))\vee p$ and $K>0$ is an absolute constant.
Combined with the fact that $U = 24\mb EM$, this yields \eqref{eq:rosenthal} of the theorem.

Finally, let us prove the inequalities \eqref{ineq:rosenthal1} and \eqref{ineq:rosenthal2}. To this end, we need to obtain the upper bound for the quantile $R_p$. 
Note that in view of the estimate \eqref{eq:bound:delta:term}, $R_p \leq K \frac{p}{\log(ep)} \mb E^{1/p}M^p$. This establishes inequality \eqref{ineq:rosenthal1}; let us remark that it can also be obtained by directly integrating the tail bound \eqref{eq:fuk-nagaev3}. 
If, on the other hand, $\|M\|_{\psi_1}<\infty$, then the second inequality of Lemma \ref{HJLp} in Appendix \ref{sec:tech} combined with the bound \eqref{eq:a00} and the well-known estimate $\mb E M \leq K \|M\|_{\psi_1}$ imply that $\l\| \sum_{k=1}^n \bs{\Delta}_k\r\|_{\psi_1}\leq K\l\| M\r\|_{\psi_1}$,  
thus 
\[
\pr{\left\|\sum_{k=1}^n \bs{\Delta}_k\right\|>t} \leq e^{-c\frac{t}{\|M\|_{\psi_1}}}
\]
and $R_p \leq c' p\|M\|_{\psi_1}$. Moreover, since $\mb E^{1/p} M^p \leq K p\|M\|_{\psi_1}$, inequality \eqref{ineq:rosenthal2} follows. \end{proof}

\noindent Next, we deduce a version of the previous result that holds for sums of nonnegative definite random matrices.

\begin{corollary}
\label{th:rosenthal2}
Let $\W_1,\ldots,\W_n\in \mb C^{d\times d}$ be a sequence of independent, self-adjoint, nonnegative definite matrices, and let $M=\max_{k}\|\W_k\|$. Moreover, let $\A_n:=\sum_{k=1}^n \mb E \W_k$. Then for all $p\geq 1$, 
\begin{align*}
\mb{E}^{1/p}\left\|\sum_{k=1}^n \W_k\right\|^p
&\leq K \left(\left\| \A_n \right\| +q\,\mb{E} M + \frac{p}{\log (ep)} \mb{E}^{1/p} M^p
\right),
\\
\mb{E}^{1/p}\left\|\sum_{k=1}^n \W_k\right\|^p
&\leq K \bigg(\left\| \A_n \right\| +\log(r(\A_n))\mb{E} M +p\|M\|_{\psi_1}
\bigg),
\end{align*}
where $q=\log(r(\A_n))\vee p$ and $K>0$ is an absolute constant.  
\end{corollary}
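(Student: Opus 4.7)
The plan is to reduce the claim to Theorem~\ref{th:rosenthal} by centering. Setting $\tilde{W}_k := W_k - \mathbb{E} W_k$ and applying the triangle inequality in operator norm followed by the $L^p$ norm gives
\[
\left(\mathbb{E}\left\|\sum_{k=1}^n W_k\right\|^p\right)^{1/p} \leq \|A_n\| + \left(\mathbb{E}\left\|\sum_{k=1}^n \tilde{W}_k\right\|^p\right)^{1/p}.
\]
The matrices $\tilde{W}_k$ are independent, centered, and self-adjoint, so Theorem~\ref{th:rosenthal} applies to the second term.

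The crucial step is to choose $V_n^2 \succeq \sum_k \mathbb{E}\tilde{W}_k^2$ such that $\|V_n^2\|$ is of order $\mathbb{E} M \cdot \|A_n\|$ and $r(V_n^2)$ is of order $r(A_n)$. The plan is to exploit the nonnegative definite structure: $W_k^2 \preceq \|W_k\| \cdot W_k$ gives $\mathbb{E}\tilde{W}_k^2 \preceq \mathbb{E}[\|W_k\| W_k]$. Truncating at a level $U$ of order $\mathbb{E} M$ and using the PSD identity $W_k^2 \mathbf{1}\{\|W_k\| \leq U\} \preceq U W_k$ yields
\[
\sum_k \mathbb{E}\tilde{W}_k^2 \preceq U A_n + \sum_k \mathbb{E}\bigl[W_k^2 \mathbf{1}\{\|W_k\| > U\}\bigr].
\]
The first summand has the desired operator norm and effective rank, while the residual tail can be absorbed into the $(\mathbb{E} M^p)^{1/p}$ and $\|M\|_{\psi_1}$ parts of the final bound by the same Hoffmann--J{\o}rgensen-type estimates used in the proof of Theorem~\ref{th:rosenthal}. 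With $\sigma^2 \lesssim \mathbb{E} M \cdot \|A_n\|$ (up to absorbable corrections), AM--GM converts $\sigma\sqrt{q}$ into $\tfrac{1}{2}(\|A_n\| + q\,\mathbb{E} M)$.

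It remains to bound the ``max'' quantities for the centered matrices. Setting $\tilde{M} := \max_k \|\tilde{W}_k\|$, the estimate $\|\tilde{W}_k\| \leq \|W_k\| + \mathbb{E}\|W_k\| \leq M + \mathbb{E} M$ yields $\mathbb{E}\tilde{M} \leq 2\,\mathbb{E} M$, $(\mathbb{E}\tilde{M}^p)^{1/p} \leq 2(\mathbb{E} M^p)^{1/p}$, and $\|\tilde{M}\|_{\psi_1} \lesssim \|M\|_{\psi_1}$. Substituting these into inequalities \eqref{ineq:rosenthal1} and \eqref{ineq:rosenthal2} produces the first and second claimed bounds, respectively.

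The main obstacle is controlling $\sigma^2$. Because $\|W_k\|$ and $W_k$ are not independent, one cannot directly factorize $\mathbb{E}[\|W_k\| W_k]$ as $\mathbb{E}\|W_k\| \cdot \mathbb{E} W_k$; the truncation above produces a residual that must be handled carefully so as not to spoil the effective-rank estimate. Verifying that $V_n^2$ can be chosen with $r(V_n^2) \lesssim r(A_n)$ while still dominating $\sum_k \mathbb{E}\tilde{W}_k^2$ is the technical heart of the argument.
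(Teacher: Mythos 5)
Your high-level plan is right — pull out $\|A_n\|$ by Minkowski, use the positive semi-definite structure $W_k^2 \mathbf{1}\{\|W_k\|\le U\}\preceq U W_k$ to build a $V_n^2$ of the form $U\,A_n$ with $r(V_n^2)=r(A_n)$, and then AM--GM. That is exactly the engine driving the paper's proof. However, there is a real gap at the step where you need $V_n^2\succeq\sum_k\mathbb{E}\tilde W_k^2$: after truncating you are left with the residual $\sum_k\mathbb{E}[W_k^2\mathbf{1}\{\|W_k\|>U\}]$, and you assert that it ``can be absorbed into the $(\mathbb{E}M^p)^{1/p}$ and $\|M\|_{\psi_1}$ parts by the same Hoffmann--J{\o}rgensen-type estimates.'' This does not work if you apply Theorem~\ref{th:rosenthal} as a black box: its hypothesis requires a single $V_n^2$ dominating the \emph{full} second moment of the centered summands, and a residual that is included in $V_n^2$ would destroy both the bound on $\|V_n^2\|$ and the control on $r(V_n^2)$; a residual that is excluded from $V_n^2$ simply means the hypothesis is not satisfied. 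There is no channel in the statement of the theorem through which a leftover piece of the variance matrix can migrate into the $M$-moment terms.

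The paper avoids this difficulty by symmetrizing rather than centering. After the symmetrization inequality, one works with $\sum_k\eps_k W_k$, for which $(\eps_k W_k)^2=W_k^2$ \emph{exactly} and $\|\eps_k W_k\|=\|W_k\|$. The quantity that actually enters the proof of Theorem~\ref{th:rosenthal} (via Proposition~\ref{th:fuk-nagaev}) is the \emph{truncated} second moment $\sum_k\mathbb{E}[W_k^2\mathbf{1}\{\|W_k\|\le U\}]$ with $U=24\,\mathbb{E}M$; this is dominated by $24\,\mathbb{E}M\cdot A_n$ with no residual at all, so $V_n^2=24\,\mathbb{E}M\cdot A_n$ works cleanly and $r(V_n^2)=r(A_n)$. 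Your centering route also loses this clean algebra because $\tilde W_k$ is no longer nonnegative definite, so $\tilde W_k^2\mathbf{1}\{\|\tilde W_k\|\le U\}\preceq U\tilde W_k$ fails, and relating the truncated moment of $\tilde W_k$ to that of $W_k$ reintroduces exactly the residual you cannot control. The bounds $\mathbb{E}\tilde M\le 2\,\mathbb{E}M$ etc.\ that you derive are fine but unnecessary under symmetrization, since $\max_k\|\eps_k W_k\|=M$ already. To repair the proposal, replace centering by the symmetrization inequality $\mathbb{E}^{1/p}\|\sum_k(W_k-\mathbb{E}W_k)\|^p\le 2\,\mathbb{E}^{1/p}\|\sum_k\eps_k W_k\|^p$ and invoke \eqref{ineq:rosenthal1} (resp.~\eqref{ineq:rosenthal2}) with $V_n^2=24\,\mathbb{E}M\cdot A_n$, noting that the proof of Theorem~\ref{th:rosenthal} only requires this $V_n^2$ to dominate the truncated second moment at level $U=24\,\mathbb{E}M$.
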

\begin{proof}  
In view of Minkowski's inequality followed by the symmetrization inequality, 
\begin{align*}
\mb{E}^{1/p}\left\|\sum_{k=1}^n \W_k\right\|^p&\leq 
\|\A_n\| + \mb{E}^{1/p}\left\|\sum_{k=1}^n \W_k -\mb E\W_k\right\|^p
\\
&\leq \|\A_n\| + 2\mb{E}^{1/p}\left\|\sum_{k=1}^n \eps_k \W_k \right\|^p,
\end{align*}
where $\eps_1,\ldots,\eps_n$ are i.i.d.~random signs independent of $\W_1,\ldots,\W_n$. 
To estimate the second term in the sum above, we apply inequality \eqref{ineq:rosenthal1}. Recall that $\V_n^2$ must satisfy $\V_n^2 \succeq \sum_k \mb{E} \W_k^2 \bs{1}\{\|\W_k\|\leq 24\mb EM\}$. Note that for all $k$,
\[
\mb E \l[ \W_k^2 \bs{1}\{ \|\W_k \| \leq 24 \mb EM \} \r]
\preceq 24 \mb EM\cdot \mb E\W_k
\]
since $\W_k \succeq 0$ with probability 1. This relation implies that we can set 
\[
\V_n^2 = 24 \mb E M\cdot \A_n,
\]
whence $r(\V_n^2) = r(\A_n)$. Moreover, 
\(\sigma\sqrt{q} = \sqrt{24q \|\A_n\| \mb EM} 
\leq \|\A_n\| + 6q \mb EM,
\)
hence \eqref{ineq:rosenthal1} yields the bound
\[
\mb{E}^{1/p}\left\|\sum_{k=1}^n \eps_k \W_k \right\|^p \leq 
K'\l( \|\A_n\| + q\mb EM + \frac{p}{\log(ep)} \mb E^{1/p} M^p\r),
\]
which implies the claim. The second inequality is obtained in a similar manner where the inequality \eqref{ineq:rosenthal2} is used in place of \eqref{ineq:rosenthal1}.  
\end{proof}

\subsection{Inequalities for empirical processes}
\label{sec:empirical}

The only part of the previous arguments that exploits the ``non-commutative'' nature of the random variables is the application of Matrix Bernstein's inequality. In this section, we state the results produced by our method for general empirical processes. The only required modification is the application of Bousquet's version of Talagrand's concentration inequality \eqref{eq:bousquet} in place of Bernstein's inequality. We state only the versions of Theorems \ref{th:fuk-nagaev3} and \ref{th:rosenthal} and remark on the key differences. The required changes to the proofs are minimal, and hence we avoid the details. 

Let $\mc{F}$ be a set of measurable real-valued functions defined on some measurable space $S$ and let $X_1,\ldots,X_n$ be i.i.d. copies of an $S$-valued random variable $X$. Assume that $\mb{E}f(X)=0$ for all $f\in \m F$. Set $F(x):=\sup_{f\in \mc{F}}|f(x)|$, $M = \max_{k=1,\ldots, n} F(X_k)$, and suppose that $\mb EM^p<\infty$ for some $p\geq 2$. Denote $Z=\sup _{f \in \mathcal{F}} \sum_{k=1}^n f(X_k)$; for simplicity, we assume that $Z$ is measurable. 
Finally, let $\sigma_\ast$ satisfy $\sigma_\ast^2\geq n\sup_{f\in \mc{F}} \mb{E}f^2(X)$. For example, in the main case of interest in this article, $\l\| \sum_{k=1}^n \W_k \r\| = \sup_{\|v\|_2=1} \l|\mathrm{trace}\l( (\sum_{k=1}^n \W_k) vv^{\top}\r) \r|$ corresponding to 
\(
\mc{F} = \l\{ f_v(\cdot) = \l|\mathrm{trace}\l((\cdot)vv^{\top}\r)\r|, \ \|v\|_2=1\r\}.
\)
The following result can be viewed as an improvement of Adamcszak's inequality \cite[][Theorem 2]{adamczak2010few} which in turn extends Adamczak's result in \citep{adamczak2008tail} to the heavy-tailed case.
\begin{theorem}
\label{th:emp1}
For all $t\geq \sqrt{2}\sigma_\ast$,
\begin{multline}
\label{eq:talagrand:2}
\mb{P}\left(Z > 24\l( \mb EZ + t\r)\right)\leq  K\Bigg( \exp\left(-\frac{t^2}{2\sigma_\ast^2 + 64t\mb EM}\right) 
 + \mb{P}\left(M\geq t\right) 
 \\
 + \left(\frac{p}{\log (ep)}\right)^{2p}\left(\frac{\mb{E}M^p}{t^{p}}\right)^2\Bigg)
\end{multline}
where $K$ is an absolute constant. 
\end{theorem}
\noindent Adamczak's version of the bound states that for any $\delta>0$, $0<\eta\leq 1$, $p\geq 1$, there exists $C(\delta,\eta,p)>0$ such that for all $t\geq 0$,
\begin{equation}
    \label{eq:adamczak:1}
\mb{P}\l( Z \geq (1+\eta)\mb EZ + t\r)\leq \exp\left(-\frac{t^2}{2(1+\delta)\sigma^2}\right) + C(\delta,\eta,p)\frac{\mb EM^p}{t^p}.
\end{equation}
Note that, compared to \eqref{eq:talagrand:2}, tail bounds given by inequality \eqref{eq:adamczak:1} are slightly worse. 
For instance, integrating inequality \eqref{eq:talagrand:2} yields the following moment bound, while \eqref{eq:adamczak:1} would not be sufficient for this purpose. 
\begin{theorem}
\label{th:emp2}
For all $p\geq 1$,
\begin{equation}
    \label{eq:emp2}
\mb{E}^{1/p} Z^p \leq K \left(\mb EZ + \sigma_\ast \sqrt{p}+p\mb EM + \frac{p}{\log (ep)} \mb{E}^{1/p} M^p \right).
\end{equation}
\end{theorem}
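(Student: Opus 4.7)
The plan is to integrate the tail bound of Theorem \ref{th:emp1} against $p\,t^{p-1}\,dt$, in close analogy with the derivation of Theorem \ref{th:rosenthal} from Proposition \ref{th:fuk-nagaev}. Setting $C_0 := 24(\mb EZ + \sqrt{2}\sigma_\ast)$ and $Y := (Z - C_0)_+$, Minkowski's inequality gives $(\mb E Z^p)^{1/p} \leq C_0 + (\mb E Y^p)^{1/p}$, so it suffices to bound $(\mb E Y^p)^{1/p}$. By integration by parts,
\[
\mb E Y^p \;=\; p\int_0^\infty t^{p-1}\, \mb P\bigl(Z > 24(\mb EZ + \sqrt{2}\sigma_\ast + t)\bigr)\, dt,
\]
and on the entire range of integration the tail bound of Theorem \ref{th:emp1} applies (with moment parameter $p$) at the shifted argument $s := t + \sqrt{2}\sigma_\ast \geq \sqrt{2}\sigma_\ast$.

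Next I would split the resulting estimate into its three terms and integrate each separately. The Bernstein-type term $\exp\bigl(-s^2/(2\sigma_\ast^2 + 64 s\,\mb EM)\bigr)$ is handled by a standard split at $s \asymp \sigma_\ast^2/\mb EM$ into a sub-Gaussian and an exponential regime, producing a contribution of order $\sigma_\ast \sqrt{p} + p\,\mb EM$ after taking the $p$-th root. The term $\mb P(M \geq s)$ integrates by Fubini to at most $\mb E M^p$, contributing $(\mb E M^p)^{1/p}$, which is absorbed into the last term of \eqref{eq:emp2}.

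The only step requiring care is the polynomial-tail term $\bigl(p/\log(ep)\bigr)^{2p}(\mb E M^p)^2\, s^{-2p}$: the factor $s^{-2p}$ is not integrable against $t^{p-1}$ near zero. I would remedy this by truncating the range at
\[
t_0 := K_1\, \frac{p}{\log(ep)}\, (\mb E M^p)^{1/p},
\]
using the trivial bound $\mb P(\cdot) \leq 1$ on $[0, t_0]$ and the full polynomial tail on $[t_0, \infty)$. Choosing $K_1$ large enough balances the two contributions, each being of order $t_0^p$, so that after extracting the $p$-th root the combined bound is $(p/\log(ep))\,(\mb E M^p)^{1/p}$, matching the final term of \eqref{eq:emp2}. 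The main obstacle is precisely this truncation: although the doubled exponent $2p$ looks stronger than necessary, the contribution to the $p$-th moment is governed by the cutoff $t_0$, and it is the interplay between the doubled exponent and this scale that produces exactly the gain $p/\log(ep)$ — the same mechanism as in the matrix case. For $p \in [1,2)$, where the tail bound of Theorem \ref{th:emp1} is not efficient, one can instead apply the $p=2$ case together with the monotonicity of the $L^p$-norms.
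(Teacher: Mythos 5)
Your route — integrating the tail bound of Theorem~\ref{th:emp1} directly — is legitimate and is in fact the alternative the paper itself points out at the end of the proof of Theorem~\ref{th:rosenthal} (``the inequality \eqref{ineq:rosenthal1} can be also obtained by integrating the tail bound \eqref{eq:fuk-nagaev3} directly''). The paper's stated proof of Theorem~\ref{th:rosenthal}, which is the template it says to follow for Theorem~\ref{th:emp2}, instead integrates the analogue of Proposition~\ref{th:fuk-nagaev}, whose last summand is a product $\mb P(\|\sum\bs{\Delta}_k\|>t/2)\cdot\mb P(\|\sum\eps_k W_k\|>t)$; integrating that product over $[0,Q_p]$ and $[Q_p,\infty)$ yields a self-referencing inequality in $\mb E\|\sum\eps_k W_k\|^p$ which is then solved, so no truncation is needed. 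Your approach squares the polynomial tail first (via Theorem~\ref{th:emp1}) and then pays for the non-integrability near $t=0$ with the truncation at $t_0\asymp \frac{p}{\log(ep)}(\mb EM^p)^{1/p}$; the balancing you describe does produce exactly the right term, so the two routes buy the same estimate with roughly the same amount of work.

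Two smaller points. First, the displayed identity $\mb E Y^p = p\int_0^\infty t^{p-1}\,\mb P\bigl(Z>24(\mb EZ+\sqrt{2}\sigma_\ast+t)\bigr)\,dt$ is off by a factor $24^p$: since the argument of the probability is $C_0+24t$ rather than $C_0+t$, the correct statement is $\mb E Y^p = 24^p\, p\int_0^\infty t^{p-1}\,\mb P\bigl(Z>24(\mb EZ+\sqrt{2}\sigma_\ast+t)\bigr)\,dt$. This only changes the absolute constant $K$ by a factor $24$ and is harmless, but it should be written correctly. Second, the closing remark about $p\in[1,2)$ is both unnecessary and, as stated, does not prove the claim: reducing to $p=2$ via monotonicity would yield $(\mb E Z^p)^{1/p}\leq(\mb E Z^2)^{1/2}\lesssim \mb EZ+\sigma_\ast+\mb EM+(\mb E M^2)^{1/2}$, whose last term is $(\mb E M^2)^{1/2}$ rather than $(\mb E M^p)^{1/p}$; the former may be strictly larger (indeed infinite if only $\mb E M^p<\infty$ for some $p<2$). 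In fact your direct integration works for all $p\geq 1$ without any detour: in the range $1\leq p<2$ the factor $p/\log(ep)$ is of order one, and the Markov/Hoffmann--J{\o}rgensen ingredient in the polynomial tail of Theorem~\ref{th:emp1} is supplied by Proposition~\ref{HJexpectation} rather than Theorem~\ref{HJLp}, exactly as in the ``$0<p<2$'' case handled at the end of the proof of Theorem~\ref{th:fuk-nagaev3}.
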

Let us compare this result with the bound of Theorem \ref{th:rosenthal}. 
When applied to the sums of random matrices, we get the inequality
\[
\mb{E}^{1/p} \left\|\sum_{k=1}^n \W_k\right\|^p \leq K \left(\mb{E}\left\|\sum_{k=1}^n \W_k\right\| + \sigma_\ast \sqrt{p}+p\mb EM + \frac{p}{\log (ep)} \mb{E}^{1/p} M^p \right)
\]
where $\sigma^2_\ast = n\sup_{\|v\|_2=1} \mb E \l( v^{\top}\W_1 v\r)^2$. The main difference is that this bound includes $\mb{E}\left\|\sum_{k=1}^n \W_k\right\|$ on the right-hand side. However, for large values of $p$, it is better than \eqref{ineq:rosenthal1} since $\sigma^2_\ast$ can be much smaller than $\l\|\sum_k \mb{E} \W_k^2 \r\|$. Moreover, Theorem \ref{th:emp2} improves on the moment inequality established by \citet{gine2000exponential} (also see \citep{boucheron2005moment}): for instance, Proposition 3.1 in \cite{gine2000exponential} states that for all $p\geq 2$,
\[
\mb{E}^{1/p} Z^p\leq K \left(\mb EZ + \sigma_\ast \sqrt{p}+ p \mb{E}^{1/p} M^p \right).
\]
The estimate provided by \eqref{eq:emp2} is better for large values of $p$ if $\mb EM$ is much smaller than $\mb E^{1/p} M^p$.

\section{Applications}
\label{sec:applications}

In this section, we apply the inequalities to get improved bounds to three classical problems - matrix subsampling, covariance estimation, and empirical eigenvector estimation.

\subsection{Norms of random submatrices}
\label{sec:subsampling}

Let $\B$ be a self-adjoint matrix, and let $\delta_1,\ldots,\delta_d$ be i.i.d.~Bernoulli random variables with $\mb E\delta_1 = \delta \in (0,1)$. Define $\R = \mathrm{diag}(\delta_1,\ldots,\delta_d)$. We are interested in the spectral norm of the matrix $\B\R$ formed by the columns $B_i, \ i\in I$ of $\B$ with indices corresponding to the random set $I=\{ 1\leq i\leq d: \ \delta_i = 1\}$. 
This problem has previously been studied by \citet{rudelson2007sampling} and \citet{tropp2015introduction} who showed that 
\begin{equation}
    \label{eq:vershynin1}
\mb E \|\B\R\|^2 \leq K\l( \delta \|\B\|^2 + \frac{\log(n\delta)}{\lfloor \delta^{-1}\rfloor} \sum_{k=1}^{\lfloor \delta^{-1}\rfloor} \l\|B_{(k)}\r\|_2^2 \r)
\end{equation}
and 
\[
\mb E \|\B\R\|^2 \leq 1.72\l( \delta \|\B\|^2 + \log\l(2\frac{\|\B\|^2_{\mathrm{F}}}{\|\B\|^2}\r) \l\|B_{(1)}\r\|_2^2 \r)
\]
respectively, where $B_{(j)}$ denotes the column of $\B$ with the $j$-th largest norm and $K$ is a numerical constant. Note that 
$\l\| B_{(1)}\r\|_2^2\geq \frac{1}{\lfloor \delta^{-1}\rfloor} \sum_{k=1}^{\lfloor \delta^{-1}\rfloor} \l\| B_{(k)}\r\|_2^2 $ but it is possible that $\log(n\delta) > \log\l(2\frac{\|\B\|^2_{\mathrm{F}}}{\|\B\|^2}\r)$ when the matrix $\B$ has small ``stable rank'' $\mathrm{srank}(\B) := \frac{\|\B\|^2_{\mathrm{F}}}{\|\B\|^2}$. 
We will show below that Tropp's bound can be improved, and that \eqref{eq:vershynin1} holds with $\log(n\delta)$ replaced with $\log(n\delta)\wedge \log(\mathrm{srank}(\B))$. To this end, let $e_1,\ldots,e_d$ denote the standard Euclidean basis, and observe that 
\[
\|\B\R\|^2= \l\| \sum_{k=1}^d \delta_k B_k e_k^{\top} \r\|^2
=\l\| \sum_{k=1}^d \delta_k B_k B_k^{\top}\r\|.
\]
We will apply \Cref{th:rosenthal2} to the last expression with $\W_k = \delta_k B_k B_k^{\top}$. Note that 
$\A_n = \delta \B\B^{\top}$ so that $\|\A_n\|=\delta \|\B\|^2$, and that $\mb E M = \mb E \l(\max_{k}\delta_k \|B_k\|_2^{2}\r)$. 
According to Lemma \ref{lemma:max-expectation} in Appendix \ref{sec:tech},
\[
\mb E\max_{k=1,\ldots,d}\delta_k \|B_k\|_2^2 \leq \frac{2}{\lfloor \delta^{-1}\rfloor}\sum_{k=1}^{\lfloor \delta^{-1}\rfloor} \|B_{(k)}\|_2^2.
\]
Finally, the effective rank $r(\A_n) = \frac{\|\B\|^2_{\mathrm{F}}}{\|\B\|^2}$ coincides with the stable rank of $B$. We record the following bound. 
\begin{corollary}
\label{cor:subsampling}
The inequalities  
\[
\mb E \|\B\R\|^2 \leq K\l( \delta \|\B\|^2 + \log\l(\frac{\|\B\|^2_{\mathrm{F}}}{\|\B\|^2}\r) \frac{1}{\lfloor \delta^{-1}\rfloor} \sum_{k=1}^{\lfloor \delta^{-1}\rfloor} \l\|B_{(k)}\r\|_2^2 \r)
\]
and 
\[
\mb E \|\B\R - \delta \B\|^2 \leq K(1-\delta)\l( \delta \|\B\|^2 + \log\l(\frac{\|\B\|^2_{\mathrm{F}}}{\|\B\|^2}\r) \frac{1}{\lfloor \delta^{-1}\rfloor} \sum_{k=1}^{\lfloor \delta^{-1}\rfloor} \l\|B_{(k)}\r\|_2^2 \r)
\]
hold for all $\delta\in(0,1)$ and a numerical constant $K>0$.
\end{corollary}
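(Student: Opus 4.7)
The plan is to reduce both bounds to direct applications of Corollary~\ref{th:rosenthal2}, essentially carrying out the computation sketched in the preamble to the statement.

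For the first inequality, I would write $\|BR\|^2 = \|\sum_{k=1}^d \delta_k B_k B_k^T\|$ as the spectral norm of a sum of independent nonnegative definite matrices $W_k := \delta_k B_k B_k^T$, and apply Corollary~\ref{th:rosenthal2} with $p=1$. The identifications are immediate: $A_n = \delta BB^T$, so $\|A_n\| = \delta\|B\|^2$ and $r(A_n) = \|B\|_{\mathrm F}^2/\|B\|^2$ coincides with the stable rank of $B$; while $\mb EM = \mb E \max_k \delta_k\|B_k\|_2^2$ is controlled by the cited Rudelson--Tropp bound. The three terms in the first bound of Corollary~\ref{th:rosenthal2} then assemble into the claimed estimate (with $q = \log(\mathrm{srank}(B))\vee 1$, the ``$\vee 1$'' being absorbed by merging the $q\,\mb EM$ and $\frac{p}{\log(ep)}\mb EM$ contributions into a single constant).

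For the second inequality, the starting observation, obtained by expanding $(BR - \delta B)(BR-\delta B)^T$ and using $R^2 = R$, is the identity
\[
\|BR - \delta B\|^2 = \Bigl\|\sum_{k=1}^d (\delta_k - \delta)^2 B_k B_k^T\Bigr\|,
\]
so the left-hand side is again the operator norm of a sum of independent nonnegative definite matrices, this time with $W_k := (\delta_k - \delta)^2 B_k B_k^T$. Since $\mb E(\delta_k - \delta)^2 = \delta(1-\delta)$, we have $A_n = \delta(1-\delta) BB^T$, producing the advertised factor $\delta(1-\delta)\|B\|^2$ and again leaving $r(A_n)$ equal to the stable rank. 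Applying Corollary~\ref{th:rosenthal2} with $p=1$ reduces matters to the estimate $\mb EM = \mb E \max_k (\delta_k - \delta)^2 \|B_k\|_2^2 \lesssim (1-\delta)\, \lfloor \delta^{-1}\rfloor^{-1} \sum_{k=1}^{\lfloor \delta^{-1}\rfloor}\|B_{(k)}\|_2^2$.

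The only nontrivial step, and the main obstacle, is this last estimate. My approach would be to use the pointwise identity $(\delta_k-\delta)^2 = (1-\delta)^2\delta_k + \delta^2(1-\delta_k)$ to split
\[
M \leq (1-\delta)^2 \max_k \delta_k \|B_k\|_2^2 + \delta^2 \max_k (1-\delta_k)\|B_k\|_2^2.
\]
For $\delta \leq 1/2$, the first summand is handled directly by Rudelson--Tropp (producing the desired $(1-\delta)$-weighted average-column-norm term), while $\delta^2 \max_k (1-\delta_k)\|B_k\|_2^2 \leq \delta^2 \|B_{(1)}\|_2^2$ can be dominated by $(1-\delta)\lfloor \delta^{-1}\rfloor^{-1}\|B_{(1)}\|_2^2 \leq (1-\delta)\lfloor \delta^{-1}\rfloor^{-1}\sum_{k=1}^{\lfloor \delta^{-1}\rfloor}\|B_{(k)}\|_2^2$ using $\delta \leq 1-\delta$ together with $\delta \leq \lfloor \delta^{-1}\rfloor^{-1}$. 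For $\delta > 1/2$, I would invoke the symmetry $BR - \delta B = -\bigl(B(I-R) - (1-\delta)B\bigr)$, in which $I-R$ has Bernoulli$(1-\delta)$ diagonal entries, and apply the already-proved $\delta \leq 1/2$ case with $\delta$ replaced by $1-\delta$; the resulting bound is at least as strong as the stated one after adjusting the absolute constant.
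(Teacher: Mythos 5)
Your first inequality and the $\delta\le 1/2$ case of the second are correct and follow the paper's route: you reduce to Corollary~\ref{th:rosenthal2} with $p=1$, identify $A_n$, and then bound $\mb EM$. The paper only \emph{states} the estimate $\mb E\max_k(\delta_k-\delta)^2\|B_k\|_2^2 \le \frac{2(1-\delta)}{\lfloor\delta^{-1}\rfloor}\sum_{k\le\lfloor\delta^{-1}\rfloor}\|B_{(k)}\|_2^2$ without derivation, so your pointwise decomposition $(\delta_k-\delta)^2=(1-\delta)^2\delta_k+\delta^2(1-\delta_k)$, applying Rudelson--Tropp to the $\delta_k$ part and crudely bounding $\delta^2\|B_{(1)}\|_2^2$ via $\delta\le 1-\delta$ and $\delta\le\lfloor\delta^{-1}\rfloor^{-1}$, genuinely fills that step in for $\delta\le 1/2$ (with an absorbed factor $3$ instead of $2$, which is harmless).

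The gap is in the final sentence handling $\delta>1/2$. The reflection $BR-\delta B = -\bigl(B(I-R)-(1-\delta)B\bigr)$ is correct, and applying the $\delta\le 1/2$ case to $\delta'=1-\delta$ is legitimate; but the resulting bound is
\[
\mb E\|BR-\delta B\|^2 \le K\delta\l((1-\delta)\|B\|^2 + \log\!\l(\tfrac{\|B\|_{\mathrm F}^2}{\|B\|^2}\r)\frac{1}{\lfloor(1-\delta)^{-1}\rfloor}\sum_{k=1}^{\lfloor(1-\delta)^{-1}\rfloor}\|B_{(k)}\|_2^2\r),
\]
and this is \emph{not} dominated by the stated bound up to a numerical constant. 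For $\delta>1/2$ one has $\lfloor\delta^{-1}\rfloor=1$, so the stated bound's logarithmic term carries the factor $(1-\delta)\|B_{(1)}\|_2^2$, whereas yours carries $\delta\cdot\frac{1}{\lfloor(1-\delta)^{-1}\rfloor}\sum_{k\le\lfloor(1-\delta)^{-1}\rfloor}\|B_{(k)}\|_2^2$, which equals $\delta\|B_{(1)}\|_2^2$ when the top $\lfloor(1-\delta)^{-1}\rfloor$ columns have equal norm. The ratio $\delta/(1-\delta)$ is unbounded as $\delta\to 1$, so ``adjusting the absolute constant'' does not close this. More to the point, the paper's own displayed estimate for $\mb EM$ (hence the stated second inequality with a numerical $K$) appears to fail in this regime: take $B=I_d$ and $\delta=1-1/d$, so that $\mb E\|BR-\delta B\|^2=\mb E\max_k(\delta_k-\delta)^2 = \delta^2(1-\delta^d)+(1-\delta)^2\delta^d\to 1-e^{-1}$ as $d\to\infty$, while the claimed right-hand side $K(1-\delta)(\delta+\log d)=K\frac{\log d + O(1)}{d}\to 0$. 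So the missing step cannot be patched for $\delta$ near $1$ as written; you would either need to restrict to $\delta\le 1/2$, or replace the stated bound by your (correct) reflected version in the regime $\delta>1/2$.
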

\begin{proof}
The first inequality has already been established above. The proof of the second bound is quite similar: it suffices to note that 
\[
\mb E \|\B\R - \delta \B\|^2 =\mb E\l\| \sum_{k=1}^d (\delta_k-\delta)^2 B_k B_k^{\top}\r\|,
\]
and that $\l\|\mb E \sum_{k=1}^d (\delta_k-\delta)^2 B_k B_k^{\top} \r\|= \delta(1-\delta) \|\B\|^2$. Moreover, 
\[
\mb E\max\limits_{k=1,\ldots,d}(\delta_k-\delta)^2\|B_k\|_2^2 \leq \frac{2(1-\delta)}{\lfloor \delta^{-1}\rfloor}\sum_{k=1}^{\lfloor \delta^{-1}\rfloor} \|B_{(k)}\|_2^2.
\]
which follows from Lemma \ref{lemma:max-expectation} in Appendix \ref{sec:tech}.
\end{proof}   

\subsection{Covariance estimation}
\label{sec:covariance}

In this section, we consider applications of our results to the covariance estimation problem. Let $X\in \mb R^d$ be a random vector such that $\mb{E}X=0$ and $\mb{E} XX^\top = \bs{\Sigma}$. Given a sequence $X_1,\dots, X_n\in \mb{R}^d$ of i.i.d.~copies of $X$, what is an upper bound for the error of the sample covariance matrix? In other words, we would like to estimate $\m R_n:=\l\| \frac{1}{n}\sum_{k=1}^n X_k X_k^\top -\bs{\Sigma}\r\|$. One of the long-standing open questions asks for the minimal assumptions on the distribution of $X$ such that $n=C(\eps)d$ observations suffices to guarantee that $\mb E \m R_n\leq \eps\|\bs{\Sigma}\|$, or that $\m R_n \leq \eps\|\bs{\Sigma}\|$ with high probability. Results of this type are often referred to as ``quantitative versions of the Bai-Yin theorem,'' after \citet{bai2008limit}. Let us give an (incomplete) overview of the rich history of the problem. It has long been known that $n=C(\eps)d$ observations suffice to get desired error bounds when the underlying distribution is sub-Gaussian \citep{vershynin2012close}. Moreover, very general and precise characterization of the behavior of the sample covariance of Gaussian random vectors with values in Banach spaces has been found by \citet{koltchinskii2017concentration} and, very recently, further sharpened by \citet{han2022exact} in the finite-dimensional case. For log-concave and sub-exponential distributions, the problem was first considered by \citet{kannan1997random}, and the bounds were significantly improved and refined by \citet{bourgain1996random} and \citet{rudelson1999random}. It took much longer to eliminate the unnecessary logarithmic factors, until the problem was finally solved by \citet{adamczak2010quantitative,adamczak2011sharp}. Finally, the case of heavy-tailed distributions was investigated by \citet{vershynin2011approximating,srivastava2013covariance,mendelson2014singular,guedon2017interval} and \citet{tikhomirov2018sample}, who showed that $4+\eps$ moments are sufficient to get the desired bound. Specifically, Tikhomirov's results imply that if $\bs{\Sigma} = I_d$ and $\sup_{\|v\|_2=1} \mb E\l|\langle X,v \rangle\r|^p = T<\infty$ for some $p>4$, then 
\[
\m R_n \leq C(p)\l( T^{2/p}\sqrt{\frac{d}{n}} + \frac{\max_{k}\|X_k\|_2^2}{n}\r)
\]
with probability at least $1-1/n$. \citet{abdalla2022covariance} refined Tikhomirov's estimates and essentially showed that $n = C(\eps)r(\bs{\Sigma})$ samples suffice to get the desired guarantees in expectation, although they considered the sample covariance based on properly truncated random vectors. Next, we show that the results by \citet{abdalla2022covariance} can be combined with the moment inequalities developed in this paper to get a sharp moment inequality for $r_n$. 
\begin{theorem}
\label{thm:covariance:estimation}
Let $X\in \mb R^d$ be a random vector such that $\mb{E}X=0$ and $\mb{E} XX^\top = \bs{\bs{\Sigma}}$. Let $X_1,\dots, X_n\in \mb{R}^d$ be i.i.d.~copies of $X$. Assume that $\frac{r(\bs{\Sigma})}{n}\leq c$ for a sufficiently small constant $c>0$, and that for some $p>4$ 
\begin{equation}
\label{eq:hypercontract}
\sup_{\|v\|_2=1}\frac{\mb{E}^{1/p}\l| \langle X,v\rangle \r|^p}{\mb{E}^{1/2}\langle X,v\rangle^2} = \kappa < \infty.
\end{equation}
Then  
\[
\mb{E}^{1/2}\left\|\frac{1}{n}\sum_{k=1}^n X_k X_k^\top -\bs{\Sigma} \right\|^2
\leq C(\kappa,p) \Bigg( \|\bs{\Sigma}\|\sqrt{\frac{r(\bs{\Sigma})}{n}}
+ \frac{\mb{E}^{2/p}\max_{k}\|X_k\|_2^p}{n}\Bigg).
\]
\end{theorem}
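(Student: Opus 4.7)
The strategy is to recognize $\|\widehat\Sigma - \Sigma\|$ as the supremum of a centered empirical process and apply the heavy-tailed moment inequality \Cref{th:emp2} at $p=2$, feeding into it the expectation-level bound of \cite{abdalla2022covariance}. I would begin by writing
\[
n\,\|\widehat\Sigma - \Sigma\| = \sup_{\|v\|_2=1} \Big|\sum_{k=1}^n \big(\langle X_k, v\rangle^2 - \langle \Sigma v, v\rangle\big)\Big| =: Z,
\]
which puts us in the empirical-process framework of \Cref{th:emp2} with $\mc F = \{\pm f_v : \|v\|_2 = 1\}$, $f_v(x) = \langle x,v\rangle^2 - \langle \Sigma v,v\rangle$, and envelope $F(x) \leq \|x\|_2^2 + \|\Sigma\|$.

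Applying \Cref{th:emp2} at $p=2$ yields
\[
(\mb E Z^2)^{1/2} \leq K\big(\mb E Z + \sigma_\ast + \mb E M + (\mb E M^2)^{1/2}\big),
\]
and each envelope-type term on the right is readily controlled. The hypercontractivity hypothesis \eqref{eq:hypercontract} with $p=4$ (inherited from $p>4$ by Jensen) gives $\sup_v \mb E\langle X,v\rangle^4 \leq \kappa^4\|\Sigma\|^2$, so $\sigma_\ast = \kappa^2\|\Sigma\|\sqrt{n}$ is admissible; and from the envelope bound combined with Jensen (using $p\geq 4$), both $\mb E M$ and $(\mb E M^2)^{1/2}$ are of order $\mb E^{2/p}\max_k\|X_k\|_2^p + \|\Sigma\|$. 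The remaining ingredient $\mb E Z = n\,\mb E\|\widehat\Sigma - \Sigma\|$ is supplied by \cite{abdalla2022covariance}: truncating $X_j$ at a level $\tau$ with $\tau^p \asymp \mb E\max_j\|X_j\|_2^p$, applying \Cref{th:rosenthal2} to the centered truncated sum (with variance proxy $V_n^2 = C(\kappa)\tr(\Sigma)\Sigma$, so that $r(V_n^2)=r(\Sigma)$), bounding the bias $\|\mb E[YY^\top]-\Sigma\|$ by \eqref{eq:hypercontract} and H\"older, and bounding the residual $\|\tfrac1n\sum_j X_jX_j^\top\mathbf{1}\{\|X_j\|_2>\tau\}\|$ by the PSD estimate $\|\sum_j x_jx_j^\top\|\leq\sum_j\|x_j\|_2^2$ together with Markov, produces under $r(\Sigma)/n\leq c$ the bound $\mb E\|\widehat\Sigma - \Sigma\| \leq C(\kappa,p)\big(\|\Sigma\|\sqrt{r(\Sigma)/n} + \mb E^{2/p}\max_j\|X_j\|_2^p/n\big)$. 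Substituting these estimates into the $L^2$ display, dividing by $n$, and using $r(\Sigma)\geq 1$ to absorb $\|\Sigma\|/\sqrt{n}$ and $\|\Sigma\|/n$ into $\|\Sigma\|\sqrt{r(\Sigma)/n}$ delivers the claim.

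The principal obstacle is importing the Abdalla-Zhivotovskiy bound in precisely this form: as published, it concerns a truncated estimator rather than the raw sample covariance, so the truncation difference $\mb E\|\widehat\Sigma - \widehat\Sigma_{\mathrm{tr}}\|$ must be tracked separately; it is of order $\mb E^{2/p}\max_j\|X_j\|_2^p/n$ by the PSD trace bound and H\"older, hence absorbable. The reason for routing through the empirical-process inequality \Cref{th:emp2} instead of applying the matrix Rosenthal inequality \Cref{th:rosenthal} directly to $W_j = \tfrac1n(X_jX_j^\top - \Sigma)$ is that the variance term of \Cref{th:emp2} carries the sharp constant $\sqrt{p}$ and thereby avoids the spurious $\sqrt{\log r(\Sigma)}$ factor that would otherwise multiply the leading $\|\Sigma\|\sqrt{r(\Sigma)/n}$ term.
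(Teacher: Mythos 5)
Your overall strategy—rewrite $n\|\widehat\Sigma-\Sigma\|$ as an empirical-process supremum $Z$ and feed an expectation bound $\E Z$ into \Cref{th:emp2}—is a plausible route and is not what the paper does, so the structure deserves scrutiny on its own terms. The envelope and variance estimates are fine: hypercontractivity with $p\geq 4$ gives $\sigma_\ast=\kappa^2\|\Sigma\|\sqrt n$, and $\E M,\,(\E M^2)^{1/2}$ are controlled by $\E^{2/p}\max_j\|X_j\|_2^p+\|\Sigma\|$, and after dividing by $n$ the extra $\|\Sigma\|$ terms are absorbable using $r(\Sigma)\geq1$. The gap is the claimed source of $\E Z$.

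The sketch you give for $\E Z$ (truncation at a scalar level $\tau$ with $\tau^p\asymp\E\max_j\|X_j\|_2^p$, Rosenthal on the truncated sum, PSD/Markov for the residual) does not reproduce the Abdalla--Zhivotovskiy bound and in fact cannot yield the stated rate. There is a built-in tension in the choice of $\tau$. On one side, the residual $\E\bigl\|\tfrac1n\sum_j X_jX_j^\top\mathbf 1\{\|X_j\|_2>\tau\}\bigr\|$ is bounded via the trace bound by $\E\|X\|_2^2\mathbf 1\{\|X\|_2>\tau\}$, which carries \emph{no} factor $1/n$ unless $\tau$ is large enough that $\pr{\|X\|_2>\tau}\lesssim 1/n$—that is, $\tau^p\gtrsim n\,\E\|X\|_2^p$. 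On the other side, feeding the truncated sum into \Cref{th:rosenthal2} (or \eqref{ineq:rosenthal1}) with any $V_n^2$ of the form $C\,n\tau^2\Sigma$ produces a leading term of order $\tau\sqrt{\|\Sigma\|\log r(\Sigma)/n}$, which matches $\|\Sigma\|\sqrt{r(\Sigma)/n}$ only if $\tau^2\log r(\Sigma)\lesssim\tr(\Sigma)$—a small value of $\tau$. When $\|X\|_2$ is essentially bounded, $\E\max_j\|X_j\|_2^p$ is of order $\E\|X\|_2^p$ and the proposed $\tau$ is far too small for the residual bound; when it is heavy-tailed, the large $\tau$ needed for the residual ruins the Rosenthal term. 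Scalar truncation plus matrix Rosenthal simply cannot separate these regimes, and the $\sqrt{\log r(\Sigma)}$ factor you are trying to avoid in the final $L^2$ step reappears here inside $\E Z$. The actual mechanism in \cite{abdalla2022covariance,tikhomirov2018sample}—and in the paper—is a \emph{direction-dependent} ``peaky-spread'' decomposition $\langle X_j,v\rangle^2\mathbf 1\{\lambda\langle X_j,v\rangle^2>1\}$ together with a uniform bound on the partial-sum functional $f(k,[n])$ (\Cref{fbound}); this is what removes the logarithm on the leading term, and it has no analogue in your sketch.

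There is a second, smaller mismatch: the paper never produces an $\E Z$ bound at all. It runs the peaky-spread decomposition to get a high-probability bound on $\|\widehat\Sigma-\Sigma\|$, then passes to $L^2$ by splitting on that event and using H\"older plus Rosenthal \eqref{ineq:rosenthal1} at level $p/2$ on the complement; the $\log r(\Sigma)$ factor from Rosenthal does appear there but is killed by the small probability $(1/n\vee e^{-r(\Sigma)})^{(p-4)/(2p)}$. Your route through \Cref{th:emp2} is cleaner in principle and would go through if you had the correct $\E Z$ bound, but to obtain it you would essentially need to re-derive the peaky-spread machinery, not substitute a scalar truncation for it.
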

\begin{proof}
As stated above, our proof builds on the results of \citet{abdalla2022covariance}, which in turn sharpen the inequality due to \citet{tikhomirov2018sample}. Before we dive into the details, let us mention that the ``hypercontractivity'' condition \eqref{eq:hypercontract} implies, in particular, that 
\begin{equation}
    \label{eq:4th:moment}
\mb E \l( XX^{\top}\r)^2 =  \mb E \l[\|X\|_2^2 XX^{\top}\r] \preceq \kappa^4 \mathrm{trace}(\bs{\Sigma})\bs{\Sigma}. 
\end{equation}
Inequality \eqref{eq:4th:moment} is well known and follows, for instance, from the proof of Lemma 2.3 in \citep{wei2017estimation}. Note that 
\[
\left\|\frac{1}{n}\sum_{k=1}^n X_k X_k^\top -\bs{\Sigma} \right\| = \sup_{\|v\|_2=1}\left|\frac{1}{n}\sum_{k=1}^n \langle X_k, v\rangle^2 - \mb{E}\langle X,v\rangle^2\right|.
\]
Let us state the following decomposition of the error into ``peaky'' and ``spread'' parts (following \citep[][]{tikhomirov2018sample}) that holds for arbitrary $\lambda>0$: 
\begin{multline}
\label{peaky-spreadsplit}
\sup_{\|v\|_2= 1}\left|\frac{1}{n}\sum_{k=1}^n \langle X_k, v\rangle^2 - \mb{E}\langle X,v\rangle^2\right| 
\leq 
\underbrace{\sup_{\|v\|_2 = 1} \frac{1}{n}\sum_{k=1}^n \langle X_k, v\rangle^2 \mathbf{1}\{\lambda \langle X_k, v\rangle^2 >1 \}}_{\text{Peaky part}} \\ + \underbrace{\sup_{\|v\|_2 = 1} \left|\frac{1}{\lambda n}\sum_{k=1}^n \psi(\lambda\langle X_k, v\rangle^2 )- \mb{E}\langle X,v\rangle^2\right|}_{\text{Spread part}},
\end{multline}
where
\[
    \psi(x)=\begin{cases}
        x, & \text{for } x\in [-1,1], \\
        \sign(x) & \text{for } |x|>1.
    \end{cases}
\]
We will estimate the two terms separately, starting with the ``spread'' part. 
To this end, we apply Proposition 3.1 in \citep[][]{abdalla2022covariance} which implies that for $\lambda = \frac{1}{\kappa^2\|\bs{\Sigma}\|}\sqrt{\frac{r(\bs{\Sigma})}{n}}$,
\begin{equation*}
\sup_{\|v\|_2 = 1}\left|\frac{1}{\lambda n}\sum_{k=1}^n \psi(\lambda\langle X_k, v\rangle^2 )- \mb{E}\langle X,v\rangle^2\right|
\leq C\kappa^2\|\bs{\Sigma}\|\l( \sqrt{\frac{r(\bs{\Sigma})}{n}} + \frac{t}{\sqrt{r(\bs{\Sigma})n}}\r)
\end{equation*}
with probability at least $1-e^{-t}$. For $t=r(\bs{\Sigma})$, we get in particular that
\begin{equation}
\label{spreadfinalbound}  
\sup_{\|v\|_2 = 1}\left|\frac{1}{\lambda n}\sum_{k=1}^n \psi(\lambda\langle X_k, v\rangle^2 )- \mb{E}\langle X,v\rangle^2\right|
\leq C\kappa^2\|\bs{\Sigma}\|\sqrt{\frac{r(\bs{\Sigma})}{n}}
\end{equation}
with probability at least $1-e^{-r(\bs{\Sigma})}$. Next, we will estimate the ``peaky'' term in inequality \eqref{peaky-spreadsplit}. Identity (5) in \cite{abdalla2022covariance} implies that for all subsets $J\subseteq [n]:=\{1,\ldots,n\}$ of cardinality at most $l$,
\begin{equation}
\label{partialsum}
    \left\|\frac{1}{n}\sum_{k\in J} X_k X_k^\top \right\| \leq \frac{f(l, [n])}{n},
\end{equation}
where the function $f(l, [n])$ is defined by
\begin{equation*}
    f(l,[n]) = \sup_{\|y\|_2=1, \|y\|_0\leq l }\left\|\sum_{k=1}^n y_k X_k\right\|_2^2.
\end{equation*}
Here, $\|y\|_0$ stands for $\sum_{k=1}^n I\{y_k \ne 0\}$, $y\in \mb R^n$. The following result provides an upper bound for $f(l,[n])$. 
\begin{theorem}\cite[Theorem 3]{abdalla2022covariance} 
\label{fbound}
Assume that $\frac{r(\bs{\Sigma})}{n}\leq c'$ for a sufficiently small positive constant $c'$. Then 
    \begin{equation*}
        f(l,[n])\leq C(p,\kappa)\left(\max_{k=1,\ldots,n} \|X_k\|_2^2 + \|\bs{\Sigma}\| l \left(\frac{n}{l}\right)^{4/(4+p)} \log^4 \frac{n}{l}\right),
    \end{equation*}
with probability at least $1-\frac{C(p)}{n}$, and the bound holds simultaneously for all integers $l$ satisfying $r(\bs{\Sigma}) \leq l \leq c'n$.
\end{theorem}
\noindent  Since $p>4$, this result implies that with probability at least $1-\frac{C(p)}{n}$,
\begin{equation}
\label{fsbound}
    f(l,[n])\leq C'(p,\kappa)\left(\max_{k} \|X_k\|_2^2 + \|\bs{\Sigma}\|  \sqrt{nl}\right).
\end{equation}
Next, let  
\[
\lambda = \frac{1}{\kappa^2\|\bs{\Sigma}\|}\sqrt{\frac{r(\bs{\Sigma})}{n}}.
\]
Following \cite{tikhomirov2018sample,abdalla2022covariance}, let us define the random set $I_v$ via
\[ 
I_v:=\l\{k\in [n]: \langle X_k, v\rangle^2 > 1/\lambda\r\} 
\] 
and set 
\[
m:=\sup_{\|v\|_2 = 1} |{I}_v|.
\]
Then, in view of inequality \eqref{partialsum}, we see that
\begin{equation}
\label{peakyfbound}
    \frac{m}{n\lambda}\leq \sup_{\|v\|_2 = 1} \frac{1}{n}\sum_{k=1}^n \langle X_k, v\rangle^2 \mathbf{1}\{\lambda \langle X_k, v\rangle^2 >1 \}\leq \frac{f(m,[n])}{n}.
\end{equation}
Now, if $r(\bs{\Sigma})\leq m \leq c'n$, then we can employ inequality \eqref{fsbound} to derive the following bound that holds with probability at least $1-\frac{C(p)}{n}$: 
\begin{multline*}
    m\leq \lambda f(m,[n]) \leq C'(p,\kappa)\left(\max_{k} \|X_k\|_2^2 + \|\bs{\Sigma}\|  \sqrt{nm}\right)\cdot \frac{1}{\kappa^2\|\bs{\Sigma}\|}\sqrt{\frac{r(\bs{\Sigma})}{n}}
    \\ \leq C'(p,\kappa)\left(\frac{\max_{k}\|X_k\|^2}{\|\bs{\Sigma}\|}\sqrt{\frac{r(\bs{\Sigma})}{n}} + \sqrt{m} \sqrt{r(\bs{\Sigma})}\right). 
\end{multline*}
Solutions to the inequality $x \leq a\sqrt{x} + b$ satisfy $x\leq 2\max(a^2, b)$. Therefore, with probability at least $1-\frac{C(p)}{n}$, 
\begin{equation}
\label{mbound}
    m\leq C_1(p,\kappa)\max\left(r(\bs{\Sigma}), \frac{\max_{k}\|X_k\|^2}{\|\bs{\Sigma}\|}\sqrt{\frac{r(\bs{\Sigma})}{n}}\right). 
\end{equation}
It remains to show that with high probability, $m\leq c'n$ if $r(\bs{\Sigma})<cn$ for $c$ small enough (clearly, if $m<r(\bs{\Sigma})$, then \eqref{mbound} holds). 
For brevity, set
\[
Z_k(v) := \frac{|\langle X_k, v\rangle|}{\left({\kappa^2\|\bs{\Sigma}\|}\sqrt{\frac{n}{r(\bs{\Sigma})}}\right)^{1/2}}\quad\text{and}\quad S=\sup_{\|v\|_2 = 1}\left(\sum_{k=1}^n \rho(Z_k(v)) - \mb{E} \rho(Z_k(v))\right). 
\]
By the definition of $I_v$, for any $v\in \mb{R}^d$,
\begin{multline}
        |I_v|  = \sum_{k=1}^n\mathbf{1}\left\{\langle X_k, v\rangle^2 > {\kappa^2\|\bs{\Sigma}\|}\sqrt{\frac{n}{r(\bs{\Sigma})}}\right\} 
        = \sum_{k=1}^n \mathbf{1}\left\{Z_k(v) > 1\right\} \leq \sum_{k=1}^n \rho\left(Z_k(v)\right),
\end{multline}
where 
\[
    \rho(x) = \begin{cases}
        0 & x\leq 1/2,\\
        2x - 1 & x\in (1/2, 1],\\
        1 & x> 1.
    \end{cases}
\]
Note that $\rho$ is Lipschitz continuous with Lipschitz constant equal to two and that $\mathbf{1}\{x\geq 1/2\}\geq \rho(x) \geq \mathbf{1}\{x\geq 1\}$. In view of Markov's inequality and assumption \eqref{eq:hypercontract}, 
\begin{equation}
\label{rhoexpectation}
    \mb{E} \rho\big(Z_k(v)\big) \leq \mb{P}\left(\langle X, v\rangle^2 > {\frac{\kappa^2}{2}\|\bs{\Sigma}\|}\sqrt{\frac{n}{r(\bs{\Sigma})}}\right)\leq \frac{4\mb{E}\langle X,v\rangle^4}{\kappa ^4 \|\bs{\Sigma}\|^2}\cdot \frac{r(\bs{\Sigma})}{n}\leq \frac{4r(\bs{\Sigma})}{n}.
\end{equation}
We deduce that 
\begin{equation*}
    \mb{P}\left( \sup_{\|v\|_2 = 1}|I_v| > c'n\right)
    \leq \mb{P} \left(\sup_{\|v\|_2=1}\sum_{k=1}^n \rho(Z_k(v))>c'n\right)
    \leq \mb{P} \Bigg(S>c'n - {4r(\bs{\Sigma})}\Bigg).
\end{equation*}
We will now estimate the right-hand side in the display above using Bousquet's version of Talagrand's inequality (Lemma \ref{talagrand-bousquet} in the Appendix). Denote 
\[
\sigma^2 = \sup_{\|v\|_2 = 1}\Var\big(\rho(Z_1(v))\big)
\]
and, observe that in view of inequality \eqref{eq:talagrand:corollary} in the Appendix applied with $U=1$ and $\sigma^2_\ast = n\sigma^2$, $\mb{P} \left(S>c'n - 4r(\bs{\Sigma})\right)\leq e^{-t}$ whenever
\[
     {4r(\bs{\Sigma})} + 2\mb{E}S + \sigma\sqrt{2tn} +  4t/3 \leq c'n.
\]
To prove that this relation holds for suitable choices of parameters $c$ and $t$ (where $r(\bs{\Sigma})/n\leq c$), first note that 
\begin{equation*}
    \Var\big(\rho(Z_1(v))\big) \leq \mb{E}(\rho(Z_1(v))^2\leq \mb{E}\rho(Z_1(v)) \leq \frac{4r(\bs{\Sigma})}{n},
\end{equation*}
where the last inequality follows from the bound \eqref{rhoexpectation}. Therefore, 
\[
\sigma^2 \leq 4r(\bs{\Sigma})/n.
\]
Next, we will estimate $\mb{E}S$. 
Let $\varepsilon_1,\dots \varepsilon_n$ be a sequence of independent random signs. The standard argument based on application of symmetrization and contraction inequalities \citep[Theorem 4.4]{ledoux2013probability}, together with the fact that $\rho(x)$ is Lipschitz continuous with Lipschitz constant equal to two, yields that 
\begin{align*}
    \mb{E}S 
    \leq 2\mb{E}\sup_{\|v\|_2 = 1}\left|\sum_{k=1}^n \varepsilon_k\rho(Z_k)\right|
    &\leq 8\mb{E}\sup_{\|v\|_2 = 1}\left|\sum_{k=1}^n \varepsilon_k \frac{|\langle X_k, v\rangle|}{\left({\kappa^2\|\bs{\Sigma}\|}\sqrt{\frac{n}{r(\bs{\Sigma})}}\right)^{1/2}}\right|
    \\
    &\leq \frac{16}{\left({\kappa^2\|\bs{\Sigma}\|}\sqrt{\frac{n}{r(\bs{\Sigma})}}\right)^{1/2}}\mb{E}\sup_{\|v\|_2 = 1}\left|\sum_{k=1}^n \varepsilon_k \langle X_k, v\rangle\right|.
\end{align*}
Note that 
\begin{multline*}
\mb{E}\sup_{\|v\|_2 = 1}\left|\sum_{k=1}^n \varepsilon_k \langle X_k, v\rangle\right| = \mb E \l\| \sum_{k=1}^n \varepsilon_k  X_k \r\|_2 \leq \mb E^{1/2}\l\| \sum_{k=1}^n  \varepsilon_k  X_k  \r\|_2^2
\\
=\l( \sum_{k=1}^n \mb E \|X_k\|_2^2 \r)^{1/2} = \l( n \|\bs{\Sigma}\| r(\bs{\Sigma})\r)^{1/2},
\end{multline*}
hence we conclude that 
\begin{equation} 
     \mb{E} S \leq \frac{16}{\kappa} n^{1/4}(r(\bs{\Sigma}))^{3/4}.
\end{equation}
As a consequence, we have to choose $c$ and $t$ such that 
\begin{equation}    
\label{eq:t:rank}
4r(\bs{\Sigma})+32n^{1/4}(r(\bs{\Sigma}))^{3/4}+2\sqrt{2t}\sqrt{r(\bs{\Sigma})}+4t/3\leq c'n,
\end{equation}
which is satisfied if both $r(\bs{\Sigma})$ and $t$ do not exceed a constant times $n$ (to be specific, set $t=c_1 n$ for $c_1$ small enough). We conclude that $m\leq c'n$ with probability at least $1-e^{-c_1 n}$ and that inequality \eqref{mbound} holds with probability at least $1-c'(p)/n$ whenever $r(\bs{\Sigma})<cn$. 
Combining this result with the estimates \eqref{fsbound} and \eqref{peakyfbound}, we deduce after a small computation that with probability at least $1-c'(p)/n$, the ``peaky'' term admits the upper bound of the form 
\begin{multline}
\label{peakyfinalbound}
\sup_{\|v\|_2 = 1} \frac{1}{n}\sum_{k=1}^n \langle X_k, v\rangle^2 \mathbf{1}\l\{\langle X_k, v\rangle^2 > \kappa^2\|\bs{\Sigma}\|\sqrt{\frac{n}{r(\bs{\Sigma})}} \r\}
\\ 
\leq \frac{f(m,[n])}{n}\leq C_2(p,\kappa) \left(\frac{\max_{k} \|X_k\|_2^2}{n} + \|\bs{\Sigma}\|\sqrt{\frac{r(\bs{\Sigma})}{n}}\right).
\end{multline}
Combining the estimates \eqref{spreadfinalbound} and \eqref{peakyfinalbound} with the decomposition \eqref{peaky-spreadsplit}, we conclude that with probability at least $1-e^{-r(\bs{\Sigma})} - c'(p)/n$,
\begin{equation}
\label{eq:cov:error}
    \left\|\frac{1}{n}\sum_{k=1}^n X_k X_k^\top - \bs{\Sigma} \right\|\leq 
    C(p,\kappa)\left(\frac{\max_{k} \|X_k\|_2^2}{n} + \|\bs{\Sigma}\|\sqrt{\frac{r(\bs{\Sigma})}{n}}\right).
\end{equation}
To obtain the desired bound in expectation, let $A$ be the event of probability at least $1-e^{-r(\bs{\Sigma})} - c'(p)/n$ on which inequality \eqref{eq:cov:error} holds. Then
\begin{multline*}
    \mb{E}^{1/2}\left\|\frac{1}{n}\sum_{k=1}^n X_k X_k^\top - \bs{\Sigma} \right\|^2 
    \leq C_1(p,\kappa)\left(\frac{\mb E^{1/2}\max_{k} \|X_k\|_2^4}{n} + \|\bs{\Sigma}\|\sqrt{\frac{r(\bs{\Sigma})}{n}}\right) 
    \\
    + \mb{E}^{1/2}\left[\left\|\frac{1}{n}\sum_{k=1}^n X_k X_k^\top - \bs{\Sigma} \right\|^2\mathbf{1}(A^c)\right].
\end{multline*}
H\"older's inequality implies that 
\begin{equation*}
  \mb{E}^{1/2}\left[\left\|\frac{1}{n}\sum_{k=1}^n X_k X_k^\top - \bs{\Sigma} \right\|^2\mathbf{1}(A^c)\right]
  \leq 
  \mb{E}^{2/p}\left\|\frac{1}{n}\sum_{k=1}^n X_k X_k^\top - \bs{\Sigma} \right\|^{p/2}\left(\frac{c'(p)}{n}+ e^{-r(\bs{\Sigma})}\right)^{\frac{p-4}{2p}}.
\end{equation*}
Finally, we invoke Rosenthal's inequality \eqref{ineq:rosenthal1} to deduce that
\begin{multline*}
\mb{E}^{2/p}\left\|\frac{1}{n}\sum_{k=1}^n X_k X_k^\top - \bs{\Sigma} \right\|^{p/2}
\leq C(p,\kappa)\Bigg[ \sqrt{\frac{r(\bs{\Sigma})}{n}}\|\bs{\Sigma}\|\sqrt{\log(er(\bs{\Sigma}))} 
\\
+ \frac{\log(er(\bs{\Sigma}))}{n}\mb{E}\max_{k}\|X_k\|_2^2
+ \frac{1}{n} \mb{E}^{2/p}\max_{k}\|X_k\|_2^p \Bigg].
\end{multline*}
To deduce the bound above, we set $\W_k = \frac{1}{n}X_k X_k^{\top}$ and  $\V_n^2 = \frac{1}{n}\kappa^4 \mathrm{trace}(\bs{\Sigma})\bs{\Sigma}$ in \eqref{ineq:rosenthal1}, which is possible due to inequality \eqref{eq:4th:moment}. This yields the relations $\sigma^2 = \frac{1}{n} \kappa^4 \|\bs{\Sigma}\|^2 r(\bs{\Sigma})$ and $r(\V_n^2) = r(\bs{\Sigma})$.
Since $r(\bs{\Sigma})<cn$ by assumption, $\log(er(\bs{\Sigma}))<C(p) \l(n\vee e^{r(\bs{\Sigma})} \r)^{(p-4)/2p}$, implying that
\begin{multline*}
\mb{E}^{1/2}\left\|\frac{1}{n}\sum_{k=1}^n X_k X_k^\top -\bs{\Sigma} \right\|^2
\leq
\\
C(\kappa,p) \Bigg( \|\bs{\Sigma}\|\sqrt{\frac{r(\bs{\Sigma})}{n}}
+ \frac{\mb{E}^{1/2}\max\limits_{k}\|X_k\|_2^4}{n} \bigvee \frac{\mb{E}^{2/p}\max\limits_{k}\|X_k\|_2^p}{n}\left(\frac{1}{n} + e^{-r(\bs{\Sigma})}\right)^{\frac{p-4}{2p}}\Bigg).
\end{multline*}
From this the claim follows.
\end{proof}

\subsection{Empirical eigenvector estimation}
\label{section:examples:eigenvector}

In this section, we continue the considerations of Section \ref{sec:covariance}. Let $X\in \mb R^d$ be a random vector such that $\mb{E}X=0$ and $\mb{E} XX^\top = \bs{\Sigma}$, and let $\lambda_1\geq\dots\geq \lambda_d$ and $u_1,\dots,u_d$ be the eigenvalues and unit eigenvectors of $\bs{\Sigma}$, respectively. Without loss of generality we assume that $\lambda_d>0$. Moreover, let $g_1 = \lambda_1 - \lambda_2$ and $g_j = \min(\lambda_{j-1} - \lambda_j, \lambda_j - \lambda_{j+1})$ for $j = 2,\dots,d$ be the different spectral gaps, meaning that if $g_j>0$, then the eigenvector $u_j$ is uniquely determined up to the sign. Given a sequence $X_1,\dots, X_n\in \mb{R}^d$ of i.i.d.~copies of $X$, let  $\hat\lambda_1\geq\dots\geq \hat\lambda_d$ and $\hat u_1,\dots,\hat u_d$ be the eigenvalues and eigenvectors of the empirical covariance matrix $\hat{\bs{\Sigma}} = \frac{1}{n}\sum_{k = 1}^n X_k X_k^{\top}$, respectively. A question that has been studied for decades asks for perturbation bounds for the empirical spectral characteristics, we refer for instance to \cite{anderson1963,horvath_kokoszka_book_2012,Hsing:eubank:book:2015} for some classical and more recent results and applications. For the special case of spectral projectors, the Davis-Kahan inequality (cf.~\cite{davis:kahan:1970}) is among the most prominent tools, see \citep{jirak:wahl:proceedings:2020,samworth:davis:kahan:2015} and the references therein for some recent context. Combining the Davis-Kahan inequality with Theorem \ref{thm:covariance:estimation} we get that
\begin{align}\label{eq:davis:kahan:classic}
    \E^{1/2} \left\|\hat u_j\hat u_j^\top - u_ju_j^\top \right\|_2^2\leq C(\kappa,p) \left( \frac{\|\bs{\Sigma}\|}{g_j}\sqrt{\frac{r(\bs{\Sigma})}{n}}
+ \frac{\mb{E}^{2/p}\max_{k=1,\ldots,n}\|X_k\|_2^p}{ng_j}\right),
\end{align}
provided that $g_j>0$. Note that the requirement $\frac{r(\bs{\Sigma})}{n}\leq c$ can be dropped in this case because the left-hand side is always bounded by $\sqrt{2}$. Although prominent, estimates of the type \eqref{eq:davis:kahan:classic} are often sub-optimal, we refer to \citep{jirak:wahl:proceedings:2020,jirak:wahl:advances:2023} for a detailed discussion. It turns out that the complexity of the problem is captured by the \textit{relative rank}
\begin{align}\label{relative:rank}
r_j(\bs{\Sigma}) = \sum_{i \neq j} \frac{\lambda_i}{|\lambda_i - \lambda_j|} + \frac{\lambda_j}{g_j},
\end{align}
in contrast to the effective rank $r(\bs{\Sigma})$. The following result improves upon the bounds in \citep[][Corollary 5]{jirak:wahl:advances:2023} in terms of less moments. 

\begin{theorem}
\label{thm:davis:kahan:proj}
Assume that for some $p>4$ condition \eqref{eq:hypercontract} holds. Let $j\in\{1,\dots,d\}$ be such that $g_j>0$. Then 
\begin{align*}
\E^{1/2} \left\|\hat u_j - u_j\right\|_2^2\leq \E^{1/2}\left\|\hat u_j\hat u_j^\top - u_ju_j^\top\right\|_2^2 &\leq C(\kappa,p) \left(\sqrt{\frac{\lambda_j}{g_j}}\sqrt{\frac{r_j(\bs{\Sigma})}{n}} + \frac{r_j(\bs{\Sigma})}{n^{1-2/p}}\right),
\end{align*}
where $\|\cdot\|_2$ also denotes the Frobenius norm for matrices and the sign of $\hat u_j$ is chosen such that $\langle \hat u_j,u_j \rangle\geq 0$.
\end{theorem}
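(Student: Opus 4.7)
My plan is to adopt the holomorphic perturbation framework of \cite{jirak:wahl:advances:2023} and substitute the sharper covariance bound from Theorem~\ref{thm:covariance:estimation}. Write $P_j := u_j u_j^{\top}$ and $\wh{P}_j := \hat u_j \hat u_j^{\top}$. After a standard sign choice, $\|\hat u_j - u_j\|_2 \leq \|\wh{P}_j - P_j\|_F$, so it suffices to bound $\mb{E}\|\wh{P}_j - P_j\|_F^2$. Put $E := \wh{\Sigma} - \Sigma$ and, on the event $\mc{E} := \{\|E\|\leq g_j/4\}$, use the contour $\gamma_j$ of radius $g_j/2$ around $\lambda_j$ to expand
\[
\wh{P}_j - P_j = L_j(E) + R_j(E),\qquad L_j(E) = \sum_{i\neq j}\frac{P_i E P_j + P_j E P_i}{\lambda_j - \lambda_i},
\]
with $R_j(E)$ collecting the quadratic and higher resolvent terms, satisfying the pointwise bound $\|R_j(E)\|_F \leq C g_j^{-1}\|E\|\,\|L_j(E)\|_F$ on $\mc{E}$.

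For the linear part, orthogonality of the eigenbasis gives
\[
\|L_j(E)\|_F^2 = 2\sum_{i\neq j}\frac{\langle E u_j,u_i\rangle^2}{(\lambda_i - \lambda_j)^2}.
\]
For $i\neq j$ one has $\langle E u_j, u_i\rangle = n^{-1}\sum_{k=1}^n \langle X_k,u_i\rangle\langle X_k,u_j\rangle$. Cauchy-Schwarz combined with the hypercontractivity assumption \eqref{eq:hypercontract} yields $\mb{E}\langle X,u_i\rangle^2\langle X,u_j\rangle^2 \leq \kappa^4 \lambda_i\lambda_j$, and the elementary inequality $\lambda_i/(\lambda_i-\lambda_j)^2 \leq g_j^{-1}\lambda_i/|\lambda_i-\lambda_j|$ produces
\[
\mb{E}\|L_j(E)\|_F^2 \leq \frac{2\kappa^4\lambda_j}{n g_j}\sum_{i\neq j}\frac{\lambda_i}{|\lambda_i-\lambda_j|}\leq \frac{2\kappa^4 \lambda_j\, r_j(\Sigma)}{n g_j},
\]
which is exactly the first term in the target bound. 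Combining with the pointwise estimate for $R_j(E)$ on $\mc{E}$ gives $\|\wh{P}_j - P_j\|_F \leq (1 + C/4)\|L_j(E)\|_F$ there, so the $\mc{E}$-contribution to $\mb{E}\|\wh{P}_j - P_j\|_F^2$ is absorbed into the linear estimate.

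The complementary event $\mc{E}^c$ is handled by the trivial bound $\|\wh{P}_j - P_j\|_F^2 \leq 2$ together with Markov's inequality applied to $\mb{E}\|E\|^{p/2}$, where the needed $(p/2)$-th moment estimate follows by a direct adaptation of the proof of Theorem~\ref{thm:covariance:estimation}: that proof already controls the ``peaky'' and ``spread'' parts in probability, and the final step converts the tail bound to a moment bound via Theorem~\ref{th:rosenthal}. The resulting estimate $\mb{E}^{2/p}\|E\|^{p/2} \lesssim_{\kappa,p} \|\Sigma\|\sqrt{r(\Sigma)/n} + n^{-1}\mb{E}^{2/p}\max_k \|X_k\|_2^p$, combined with $\mb{P}(\mc{E}^c) \leq (4/g_j)^{p/2}\mb{E}\|E\|^{p/2}$ and $g_j \leq 2\lambda_j$, yields exactly the second term $r_j(\Sigma)/n^{1-2/p}$ of the target inequality.

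The chief obstacle is that Theorem~\ref{thm:covariance:estimation} as stated produces only a second-moment bound, whereas the remainder analysis on $\mc{E}^c$ requires the $(p/2)$-th moment of $\|E\|$ with $p>4$. Lifting the proof of Theorem~\ref{thm:covariance:estimation} from $L_2$ to $L_{p/2}$ again uses the Rosenthal-type inequality of Theorem~\ref{th:rosenthal} (as is already done implicitly in the final step of the proof of Theorem~\ref{thm:covariance:estimation}), and it is this step---propagating the new moment inequality through the peaky/spread split while keeping the \emph{relative} rank $r_j(\Sigma)$ (not the coarser $r(\Sigma)$) in the leading term---that is the technically delicate part of the argument; everything else reduces to routine perturbation bookkeeping.
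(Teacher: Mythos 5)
Your approach is genuinely different from the paper's, and it has a real gap. The paper proves a deterministic, event-free inequality (display \eqref{lem:relative:davis:kahan}, borrowed from \cite{Hsing:eubank:book:2015} and \cite{JW24}): $\|\hat u_j\hat u_j^\top - u_ju_j^\top\|_2 \leq 4\sqrt{2}\,\|T_j(\hat\Sigma-\Sigma)T_j\|$, where $T_j = |R_j|^{1/2} + g_j^{-1/2}u_ju_j^\top$ is the weighted resolvent-type operator. This converts the eigenvector problem, \emph{pointwise}, into a covariance estimation problem for the transformed vector $T_j X$; since \eqref{eq:hypercontract} is invariant under linear maps, Theorem~\ref{thm:covariance:estimation} applies directly, and the identities $\tr(T_j\Sigma T_j)=r_j(\Sigma)$, $\|T_j\Sigma T_j\|\leq 2\lambda_j/g_j$ hand you both terms with $r_j(\Sigma)$ built in. There is no good/bad event, no remainder expansion, and no need for higher moments of $\|\hat\Sigma-\Sigma\|$.

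Your contour-expansion route correctly reproduces the first term: $\mb{E}\|L_j(E)\|_F^2 \lesssim \kappa^4\lambda_j r_j(\Sigma)/(ng_j)$ is fine. The gap is the treatment of $\mc{E}^c=\{\|E\|>g_j/4\}$. You need $\mb{P}(\mc{E}^c)\lesssim r_j(\Sigma)^2/n^{2-4/p}$, and you propose Markov on $\|E\|^{p/2}$ together with an $L_{p/2}$-version of Theorem~\ref{thm:covariance:estimation}. But that route necessarily replaces $r_j(\Sigma)$ by quantities like $r(\Sigma)$ and $(\tr\Sigma)/g_j$ after dividing by $g_j^{p/2}$, and these can be strictly larger than $r_j(\Sigma)$ (note $r_j(\Sigma)\leq \tr(\Sigma)/g_j$, with equality only in degenerate cases, while $r(\Sigma)$ is incomparable in the other direction). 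You flag this yourself as "the technically delicate part," but the proposal does not resolve it -- and it is precisely the content of the theorem, not bookkeeping. In effect your $\mc{E}^c$ bound proves the coarser classical Davis--Kahan estimate \eqref{eq:davis:kahan:classic}, not the relative-rank one. There is also a secondary issue: extending Theorem~\ref{thm:covariance:estimation} from $L_2$ to $L_{p/2}$ is not a one-line appeal to Theorem~\ref{th:rosenthal}; the peaky/spread decomposition holds only on a high-probability event, and you would again need to control the complement with the correct exponent, circling back to the same difficulty.

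The fix is to abandon the event split entirely and use the deterministic relative perturbation bound \eqref{lem:relative:davis:kahan}. That is what carries $r_j(\Sigma)$ through cleanly, because $T_j$ pre- and post-multiplies the error \emph{before} any norm is taken, so the reweighting lives inside the covariance estimation step rather than being bolted on afterwards.
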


\begin{remark}\label{remark:KL:coefficients}
    The random vector $X$ can be expressed in terms of the Karhunen-Lo\`{e}ve decomposition $X = \sum_{i=1}^d \sqrt{\lambda_i} \eta_iu_i$, where the Karhunen-Lo\`{e}ve coefficients $\eta_1,\dots,\eta_d$ are uncorrelated with $\E \eta_i^2 = 1$ and defined by $\eta_i=\langle X,u_i\rangle/\sqrt{\lambda_i}$. In this case, \eqref{eq:hypercontract} holds if $\eta_1,\dots,\eta_d$ form a martingale difference sequence with $\max_{i\leq d}\E^{1/p} |\eta_i|^p \leq \kappa/\sqrt{p-1}$, as can be seen by an application of Burkholder's inequality (Theorem 2.1 in \cite{rio:2009:moment}).
\end{remark}

\begin{remark}
   Since $\lambda_j \leq \|\bs{\Sigma}\|$ and $|\lambda_k - \lambda_j|\geq g_j$ for $k \neq j$, we have the inequality
\begin{align*}
\sqrt{\frac{\lambda_j}{g_j}} \sqrt{\frac{r_j(\bs{\Sigma})}{n}} \leq \frac{\|\bs{\Sigma}\|}{g_j} \sqrt{\frac{r(\bs{\Sigma})}{n}},
\end{align*}
meaning that the first term on the right-hand side of Theorem \ref{thm:davis:kahan:proj} is always an improvement over the corresponding one in \eqref{eq:davis:kahan:classic}.
\end{remark}

\begin{proof}[Proof of Theorem \ref{thm:davis:kahan:proj}]
By display (5.24) in \cite{Hsing:eubank:book:2015} combined with Proposition 1 in \cite{JW24} (see also Lemma 2 in \cite{jirak:wahl:advances:2023}), we have
\begin{align}
\label{lem:relative:davis:kahan}
\left\|\hat u_j - u_j\right\|_2 \leq \left\|\hat u_j\hat u_j^\top - u_ju_j^\top\right\|_2 \leq 4\sqrt{2} \left\|\T_j(\hat{\bs{\Sigma}} - \bs{\Sigma})\T_j\right\|
\end{align}
with 
\begin{align*}
\T_j = |\R_j|^{1/2} + g_j^{-1/2}u_ju_j^\top,\qquad |\R_j|^{1/2} = \sum_{i \neq j} \frac{1}{\sqrt{|\lambda_i - \lambda_j|}}u_iu_i^\top.
\end{align*}
Since $\T_j$ is symmetric, 
\begin{align*}
\T_j\left(\hat{\bs{\Sigma}} - \bs{\Sigma}\right)\T_j = \frac{1}{n}\sum_{k=1}^n (\T_j X_k ) (\T_j X_k)^{\top} - \mb{E} (\T_j X ) (\T_j X)^{\top}.
\end{align*}
Thus, estimating $\|\T_j(\hat{\bs{\Sigma}} - \bs{\Sigma})\T_j\|$ is again a covariance estimation problem with a random vector given by 
\begin{align*}
\T_j X = \sum_{i \neq j} \left(\frac{\lambda_i}{|\lambda_i - \lambda_j|}\right)^{1/2} \eta_i u_i + \left(\frac{\lambda_j}{g_j}\right)^{1/2} \eta_j u_j
\end{align*}
with Karhunen-Lo\`{e}ve coefficients $\eta_1,\dots,\eta_d$ introduced in Remark \ref{remark:KL:coefficients}.
We now explore the fact that assumption \eqref{eq:hypercontract} is invariant under linear transformations. Indeed, for any $u \neq 0$, we have
\begin{align*}
\frac{\E^{1/p} |\langle \T_j X, u \rangle|^p}{\E^{1/2}  \langle \T_j X, u \rangle^2} =  \frac{\E^{1/p} |\langle X, \T_j u \rangle|^p}{\E^{1/2}  \langle X, \T_j u \rangle^2} \leq \sup_{\|v\|_2 = 1} \frac{\E^{1/p} |\langle X, v \rangle|^p}{\E^{1/2} \langle X, v \rangle^2}\leq \kappa,
\end{align*}
implying that \eqref{eq:hypercontract} also holds for $\T_j X$. In addition, setting $v = u_j$ in \eqref{eq:hypercontract}, we get
\begin{align*}
\max_{i=1,\ldots,d}\E^{1/p}|\eta_i|^p \leq \kappa. 
\end{align*}
Using the triangle inequality, it follows that
\begin{align*}
\E^{2/p} \left\|\T_j X\right\|^p &= \E^{2/p}\left(\sum_{i \neq j} \frac{\lambda_i}{|\lambda_i - \lambda_j|} \eta_i^2  + \frac{\lambda_j}{g_j}\eta_j^2\right)^{p/2} \\&\leq \sum_{i \neq j} \frac{\lambda_i}{|\lambda_i - \lambda_j|} \E^{2/p} |\eta_i|^p + \frac{\lambda_j}{g_j}\E^{2/p}|\eta_j|^p \leq r_j(\bs{\Sigma}) \kappa^{2}.
\end{align*}
This in turn yields the estimate
\begin{align}\label{eq:thm:davis:kahan:proj:2}
\E^{2/p} \max_{i=1,\ldots,n}\left\|\T_j X_i\right\|^p \leq n^{2/p} r_j(\bs{\Sigma}) \kappa^{2}.
\end{align}
Moreover, we have the relations
\begin{align}\label{eq:thm:davis:kahan:proj:4}
\operatorname{trace}\left(\T_j \bs{\Sigma} \T_j\right) = r_j(\bs{\Sigma}),\qquad \left\|\T_j \bs{\Sigma} \T_j\right\| = \left(\max_{i \neq j} \frac{\lambda_i}{|\lambda_i - \lambda_j|}\right) \bigvee \frac{\lambda_j}{g_j} \leq \frac{2\lambda_j}{g_j}.
\end{align}
Theorem \ref{thm:covariance:estimation}, together with displays \eqref{eq:thm:davis:kahan:proj:2} and \eqref{eq:thm:davis:kahan:proj:4}, now yields the inequality
\begin{align*}
\E^{1/2} \left\|\hat u_j\hat u_j^\top - u_ju_j^\top\right\|_2^2&\leq  
 4\sqrt{2}\E^{1/2} \|\T_j(\hat{\bs{\Sigma}} - \bs{\Sigma})\T_j\|^2 \\ &\leq C(\kappa,p) \left(\|\T_j \bs{\Sigma} \T_j\| \sqrt{\frac{r(\T_j \bs{\Sigma} \T_j)}{n}} + \frac{\E^{2/p} \max_{i} \left\|\T_j X_i\right\|_2^p}{n}\right)\\
&\leq C'(\kappa,p) \left(\sqrt{\frac{\lambda_j}{g_j}}\sqrt{\frac{r_j(\bs{\Sigma})}{n}} + \frac{r_j(\bs{\Sigma})}{n^{1-2/p}}\right), 
\end{align*}
provided that $\frac{r_j(\bs{\Sigma})}{n}\leq c$. Finally, in the case where $\frac{r_j(\bs{\Sigma})}{n}>c$, we trivially have
\begin{align*}
    \E^{1/2} \left\|\hat u_j\hat u_j^\top - u_ju_j^\top\right\|_2^2\leq \sqrt{2}\leq \frac{\sqrt{2}}{c}\frac{r_j(\bs{\Sigma})}{n^{1-2/p}}.
\end{align*}
The conclusion follows.
\end{proof}


\bibliographystyle{apalike}
\bibliography{ref}

\appendix

\section{Auxiliary results}
\label{sec:tech}

In the Appendix, we compile the background material and technical results on which our arguments rely.
		
\begin{lemma} 
(\cite[Theorem 3.1]{minsker2011some} and \cite[Theorem 7.7.1]{tropp2015introduction})
\label{matrixBernsteinErank}
Let $\W_1, \dots, \W_n\in \mb{C}^{d\times d}$ be a sequence of independent, centered, self-adjoint random matrices such that $\|\W_k\|\leq U, \ k=1,\ldots,n$ almost surely. Assume that $\V_n^2\succeq \sum_k \mb{E}\W_k^2$ and let $\sigma^2 = \|\V_n^2\|$. Then for any $t\geq \sigma+U/3$,
\begin{equation}
\label{eq:matrix:Bernstein}
    \mb{P}\left(\left\|\sum_{k=1}^n \W_k\right\|>t\right) \leq 4
    r\left(\V_n^2\right) \exp \left[-\frac{t^2 / 2}{\sigma^2+t U / 3}\right].
\end{equation}
\end{lemma}

\begin{corollary}
\label{cor:matrixBernsteinLp}
Let $p\geq 1$. Under the assumptions of \Cref{matrixBernsteinErank}, 
\begin{equation*}
\mb{E}^{1/p}\left\|\sum_{k=1}^n \W_k\right\|^p
\leq K\l( \sigma \sqrt{q} + Uq \r)
\end{equation*}
where $q = \log(er(\V_n^2))\vee p$ and $K>0$ is a numerical constant.
\end{corollary}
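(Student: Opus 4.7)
The plan is to integrate the tail bound from \Cref{matrixBernsteinErank} via the layer-cake representation
\[
\mb{E}\l\|\sum_{k=1}^n W_k\r\|^p = p\int_0^\infty t^{p-1}\pr{\l\|\sum_{k=1}^n W_k\r\|>t}\,dt.
\]
First, I would fix a threshold $t_\ast = K_0(\sigma\sqrt{q} + Uq)$ with $K_0$ a sufficiently large absolute constant, chosen so that simultaneously $t_\ast \geq \sigma + U/3$ (ensuring the matrix Bernstein bound applies) and $\frac{t_\ast^2/2}{\sigma^2 + t_\ast U/3}\geq 2q$. Writing $\int_0^\infty = \int_0^{t_\ast} + \int_{t_\ast}^\infty$, the first piece yields $t_\ast^p$ by the trivial bound $\pr{\cdot}\leq 1$, contributing the desired $K(\sigma\sqrt{q}+Uq)$ after the $p$-th root is taken.

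For the tail integral, I would use the elementary inequality
\[
\frac{t^2/2}{\sigma^2 + tU/3}\geq \min\!\l(\frac{t^2}{4\sigma^2},\,\frac{3t}{4U}\r)
\]
to split \Cref{matrixBernsteinErank} into a sub-Gaussian and a sub-exponential tail,
\[
\pr{\l\|\sum_{k=1}^n W_k\r\|>t}\leq 4r(V_n^2)\bigl(e^{-t^2/(4\sigma^2)} + e^{-3t/(4U)}\bigr).
\]
Substituting $u = t^2/(4\sigma^2)$ in the first integrand and $v = 3t/(4U)$ in the second converts each of the resulting integrals into an upper incomplete gamma function of the form $\Gamma(\alpha,q)$ with $\alpha \in \{p/2,p\}$, starting from the lower limit $q$ because of the choice of $t_\ast$. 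Crucially, $q\geq \log(er(V_n^2))$ gives $r(V_n^2)e^{-q}\leq e^{-1}$, which absorbs the effective-rank prefactor coming from matrix Bernstein.

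It remains to bound $\Gamma(\alpha,q)$ when the argument $q$ lies above the shape parameter $\alpha$. For $q\geq \alpha$ one has the standard estimate $\Gamma(\alpha,q)\leq 2q^{\alpha-1}e^{-q}$, which after collecting constants and $p$-th roots yields $(K\sigma\sqrt{q})$ from the Gaussian piece and $(KUq)$ from the Poisson piece. Summing the three contributions and using the sub-additivity of $x\mapsto x^{1/p}$ produces the stated bound. The main obstacle, though mild, is handling the range $p<2$ where the shape parameter $\alpha = p/2$ is smaller than $1$ and one must verify uniformly over $p\geq 1$ that $q\geq \alpha$ and that the incomplete-gamma recursion $\Gamma(\alpha,q)=q^{\alpha-1}e^{-q}+(\alpha-1)\Gamma(\alpha-1,q)$ still yields the clean bound; this is precisely why the definition $q = \log(er(V_n^2))\vee p$ includes the $\vee\,p$ term rather than only $\log(er(V_n^2))$.
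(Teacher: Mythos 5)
The paper offers no argument at all for this corollary --- it simply cites Wainwright (exercise 2.8) and Talagrand (Lemma 2.3.2, for $p=1$) as standard references. Your proposal fills in a genuine self-contained derivation, and the architecture is sound: split the layer-cake integral at a threshold $t_\ast\asymp\sigma\sqrt{q}+Uq$, use the trivial bound $\mb P\leq 1$ below $t_\ast$ and \Cref{matrixBernsteinErank} above, separate the Bernstein exponent into sub-Gaussian and sub-exponential tails, substitute to obtain upper incomplete gamma functions, and let the $\log(er(V_n^2))$ part of $q$ absorb the effective-rank prefactor.

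One technical claim needs repair. The estimate you invoke, ``for $q\geq\alpha$ one has $\Gamma(\alpha,q)\leq 2q^{\alpha-1}e^{-q}$,'' is false as stated: taking $\alpha=p$ and $q=p$, Stirling gives
\[
\Gamma(p,p)\approx\tfrac12\Gamma(p)\approx\sqrt{\pi p/2}\,p^{p-1}e^{-p},
\]
so the ratio $\Gamma(\alpha,q)/(q^{\alpha-1}e^{-q})$ grows like $\sqrt{p}$ and admits no absolute constant bound at $q=\alpha$. The correct statement requires the argument to exceed the shape parameter by a constant factor: from the substitution $u=q(1+s)$ and $(1+s)^{\alpha-1}\leq e^{(\alpha-1)s}$ one gets
\[
\Gamma(\alpha,q)\leq \frac{q^{\alpha}e^{-q}}{q-\alpha+1}\leq 2q^{\alpha-1}e^{-q}\quad\text{once } q\geq 2\alpha-1.
\]
Your proof does not actually fail, because the lower limits after the changes of variables are $t_\ast^2/(4\sigma^2)\geq K_0^2 q/4$ and $3t_\ast/(4U)\geq 3K_0 q/4$, and choosing, say, $K_0\geq 3$ puts both at least $2q\geq 2p\geq 2\alpha$, which comfortably satisfies the corrected condition. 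But you should state the threshold as $q\geq 2\alpha$ (or the lower limit $\geq 2\alpha$) rather than $q\geq\alpha$, and make explicit that it is the free constant $K_0$ --- not the bare inequality $q\geq p$ --- that supplies the needed slack; as written, the line about $\Gamma(\alpha,q)$ would mislead a reader who tried to use it with argument exactly $q$.
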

\begin{proof}
The proof is based on substituting the tail bound \eqref{eq:matrix:Bernstein} into the relation
\[
\mb{E}^{1/p}\left\|\sum_{k=1}^n \W_k\right\|^p = \l(\int_{0}^\infty p t^{p-1}\mb{P}\l(\l\|\sum_{k=1}^n \W_k\r\|> t \r)dt\r)^{1/p}
\]
and following a standard sequence of estimates. Inequalities of this type are well known: see, for instance, \citep[exercise 2.8]{wainwright2019high} or \citep[Lemma 2.3.2]{talagrand2014upper} where the case $p=1$ is considered.
\end{proof}

\begin{lemma}\citep[Hoffmann-J{\o}rgensen inequality:][Proposition 6.7]{ledoux2013probability}
\label{prop:HJtail}
Let $X_1,\ldots,X_n$ be independent, symmetrically distributed random variables with values in a separable Banach space with norm $\|\cdot\|_B$. Set $S_n=\sum_{k=1}^n X_k$. Then for any $s,t>0$, 
\begin{equation*}
    \mathbb{P}\left(\left\|S_n\right\|_B>2 t+s\right) \leq4\left(\mathbb{P}\left(\left\|S_n\right\|_B>t\right)\right)^2+\mathbb{P}\left(\max _{k}\left\|X_k\right\|_B>s\right).
\end{equation*}
\end{lemma}
\begin{lemma}\cite[Proposition 6.8]{ledoux2013probability}
\label{HJexpectation}
Assume that $q>0$, and let $X_1,\ldots,X_n$ be independent, symmetrically distributed random variables with values in a separable Banach space with norm $\|\cdot\|_B$. Set $S_n=\sum_{k=1}^n X_k$. Then for $t_0 = \inf\{t>0: \ \mb{P}(\|S_n\|_B>t)\leq (8\cdot 3^q)^{-1}\}$,
\begin{equation}
    \mathbb{E}\left\|S_n\right\|_B^q \leq 2 \cdot 3^q \mathbb{E} \max_{k}\left\|X_k\right\|_B^q + 2\left(3 t_0\right)^q.
\end{equation}
\end{lemma}

\begin{lemma}\cite[Theorems 6.20 and 6.21]{ledoux2013probability}
\label{HJLp}
Let $X_1,\ldots,X_n$ be independent random variables with values in a separable Banach space with norm $\|\cdot\|_B$. There exists a numerical constant $K$ such that for all $p > 1$,
\begin{align*}
   \mb E^{1/p} \left\|\sum_{k=1}^n X_k\right\|_B^p &\leq K \frac{p}{\log(ep)} \left(\mb E\left\|\sum_{k=1}^n X_k\right\|_B + \mb E^{1/p}\max_{k}\|X_k\|_B^p\right) \text{ and}
   \\
   \l\| \Big\|\sum_{k=1}^n X_k\Big\|_B \r\|_{\psi_1}& \leq K
   \left(\mb E\left\|\sum_{k=1}^n X_k\right\|_B + \l\|\max_{k}\|X_k\|_B \r\|_{\psi_1}\right).
\end{align*}
\end{lemma}

\begin{lemma} 
\label{talagrand-bousquet} 
\cite[Theorem 7.3]{bousquet2003concentration} 
Let $\mc{F}$ be a countable set of measurable real-valued functions and let $X_1,\ldots,X_n$ be i.i.d. Assume that $\mb{E}f(X_1)=0$ for all $f\in \m F$ and that $\sup_{f\in \m F} |f(X_1)|\leq U$ with probability $1$. Denote $Z=\sup _{f \in \mathcal{F}} \sum_{k=1}^n f\left(X_k\right)$.
Assume that $\sigma_\ast^2\geq n\sup_{f\in \mc{F}} \mb{E}f^2(X_1)$ and set $v = \sigma_\ast^2 + 2U\mb{E}[Z]$. Then for all $t\geq 0$, 
\begin{equation}
\label{eq:bousquet}
    \mb{P}\left(Z \geq \mb{E} Z + \sqrt{2 t v}+\frac{tU}{3}\right) \leq e^{-t}.
\end{equation}
\end{lemma}
\noindent Inequality \eqref{eq:bousquet} immediately implies that with probability at least $1-e^{-t}$, 
\begin{equation}
    \label{eq:talagrand:corollary}
Z\leq 2\mb EZ + \sigma_\ast\sqrt{2t} + 4tU/3.
\end{equation}
Indeed, since $\sqrt{a+b}\leq \sqrt{a}+\sqrt{b}$ and $\sqrt{4ab}\leq a+b$ for all $a,b\geq 0$, we can estimate $\sqrt{2tv}$ as 
\[
\sqrt{2tv} \leq \sigma_\ast\sqrt{2t} + \sqrt{4tU \,\mb EZ} \leq \sigma_\ast\sqrt{2t} + \mb EZ + tU, 
\]
which yields \eqref{eq:talagrand:corollary}.
\begin{lemma}
\label{lemma:max-expectation}
(\cite[Lemma 5.1]{rudelson2007sampling} or \cite[Proposition 2.3]{tropp2008norms})
Let $\delta_1,\ldots,\delta_d$ be i.i.d.~Bernoulli random variables with $\mb E\delta_1 = \delta$  such that $\delta \geq 1/d$, and assume that $a_1,\ldots,a_d$ are nonnegative real numbers. Then
\[
\mb E\max_{k=1,\ldots,d} \delta_k a_k \leq \frac{2}{\lfloor \delta^{-1}\rfloor}\sum_{k=1}^{\lfloor \delta^{-1}\rfloor} a_{(k)}
\]
where $a_{(1)}\geq \ldots \geq a_{(d)}$ is a non-increasing rearrangement of $\{a_1,\ldots,a_d\}$.
\end{lemma}
\end{document}